\numberwithin{equation}{section}
\newtheorem{theorem}{Theorem}[section]
\newtheorem{lemma}[theorem]{Lemma}
\newtheorem{proposition}[theorem]{Proposition}
\newtheorem{conjecture}[theorem]{Conjecture}
\theoremstyle{definition}
\newtheorem{definition}[theorem]{Definition}
\newtheorem{example}[theorem]{Example}
\newtheorem{remark}[theorem]{Remark}
  \newcommand{\eps}{{\varepsilon}}
  \newcommand{\e}{{\varepsilon}}
 \newcommand{\D}{{\mathcal D}}
  \newcommand{\down}{{\! \downarrow}}
  \newcommand{\defect}{{\mathrm{def}}}
 \newcommand{\wt}{{\text{wt}}}
 \newcommand{\len}{{\text{len}}}
  \newcommand{\ZZ}{{\mathbb Z}}
  \newcommand{\PP}{{\mathbb P}}
\newcommand{\oo}{\multiput(0,0)(10,0){2}{\circle{2}}}
\newcommand{\ooo}{\multiput(0,0)(10,0){3}{\circle{2}}}
\newcommand{\oooo}{\multiput(0,0)(10,0){4}{\circle{2}}}
\newcommand{\ooooo}{\multiput(0,0)(10,0){5}{\circle{2}}}
\newcommand{\oooooo}{\multiput(0,0)(10,0){6}{\circle{2}}}
\newcommand{\Eeee}{\put(1,0){\line(1,0){8}}}
\newcommand{\eEee}{\put(11,0){\line(1,0){8}}}
\newcommand{\eeEe}{\put(21,0){\line(1,0){8}}}
\newcommand{\eeeE}{\put(31,0){\line(1,0){8}}}
\newcommand{\eeeeE}{\put(41,0){\line(1,0){8}}}
\newcommand{\EEee}{\qbezier(0.8,0.6)(4,5)(10,5)\qbezier(10,5)(16,5)(19.2,0.6)}
\newcommand{\eEEe}{\qbezier(10.8,0.6)(14,5)(20,5)\qbezier(20,5)(26,5)(29.2,0.6)}
\newcommand{\BBee}{\qbezier(0.2,1)(4,10)(10,10)\qbezier(10,10)(16,10)(19.8,1)}
\newcommand{\eBBe}{\qbezier(10.2,1)(14,10)(20,10)\qbezier(20,10)(26,10)(29.8,1)}
\newcommand{\eeBB}{\qbezier(20.2,1)(24,10)(30,10)\qbezier(30,10)(36,10)(39.8,1)}
\newcommand{\eeeBB}{\qbezier(30.2,1)(34,5)(40,5)\qbezier(40,5)(46,5)(49.8,1)}
\newcommand{\eeOe}{\qbezier(20.8,0.6)(25,4)(29.2,0.6)
                  \qbezier(20.8,-0.6)(25,-4)(29.2,-0.6)}
\newcommand{\eeeeO}{\qbezier(40.8,0.6)(45,4)(49.2,0.6)
                  \qbezier(40.8,-0.6)(45,-4)(49.2,-0.6)}
  \title{Computing Node Polynomials for Plane Curves}
  \author{Florian Block}
  \date{\today}
  \address{Department of Mathematics, University of Michigan, Ann Arbor, MI 48109, USA}
  \email{blockf@umich.edu}
\thanks {\emph {2010 Mathematics Subject Classification:} Primary:
    14N10. Secondary: 14T05, 14N35, 05A99. 
}
\thanks{The author was partially supported by the NSF grant
  DMS-055588.}
\keywords {Severi degree, G\"ottsche conjecture, node polynomials, floor diagram}
\begin{document}

\begin{abstract}
According to the G\"ottsche conjecture (now a theorem), the degree
$N^{d, \delta}$ of the Severi variety of plane curves of degree $d$
with $\delta$ nodes is given by a polynomial in $d$, provided $d$ is
large enough. These ``node polynomials'' $N_\delta(d)$ were determined by Vainsencher
and Kleiman--Piene 
for $\delta \le 6$ and $\delta \le 8$, respectively.
Building on ideas of Fomin and Mikhalkin, we develop an explicit
algorithm for computing all node polynomials, and use it to compute
$N_{\delta}(d)$ for $\delta \le 14$.
Furthermore, we improve the
threshold of
polynomiality
and verify G\"ottsche's conjecture on the optimal threshold
up to $\delta \le 14$. We also determine the first $9$ coefficients of
$N_\delta(d)$, for general $\delta$, settling and extending a 1994
conjecture of Di Francesco and Itzykson.
\end{abstract}

\maketitle

\section{Introduction and Main Results}
\label{sec:intro}


\subsection*{Node Polynomials}

Counting algebraic plane curves is a very old problem. In 1848,
J.~Steiner  determined that the number of curves of degree $d$ with $1$
node through $\frac{d(d+3)}{2} -1$ generic points in the complex
projective plane $\PP^2$ is
$3(d-1)^2$. Much effort has since been put forth towards answering the
following question:

\smallskip

\begin{center}
\emph{How many (possibly reducible) degree $d$ nodal curves with \\
  $\delta$ nodes pass through $\frac{d(d+3)}{2} - \delta$ generic
  points in $\PP^2$?}
\end{center}
\smallskip
The answer to this question is the \emph{Severi degree} $N^{d,
  \delta}$, the degree of the corresponding Severi variety.
In 1994, P.~Di Francesco and C.~Itzykson
\cite{DI} conjectured that $N^{d, \delta}$ is given by a polynomial in $d$
(assuming $\delta$ is fixed and $d$ is sufficiently large). It is not hard to see that, if such a polynomial exists, it has
to be of degree $2\delta$.

Recently, S.~Fomin and
G.~Mikhalkin \cite[Theorem 5.1]{FM} 
established the polynomiality of $N^{d, \delta}$ using tropical
geometry and floor decompositions. More precisely, they
showed that there exists, for every $\delta \ge 1$, a \emph{node polynomial}
$N_\delta(d)$ which satisfies $N^{d,\delta} = N_\delta(d)$ for all $d
\ge 2 \delta$. (The $\delta = 0$ case is trivial as $N^{d, 0} = 1$ for
all $d \ge 1$.)

For $\delta = 1, 2, 3$, the polynomiality of the Severi degrees and
the formulas for $N_\delta(d)$ were determined in the 19th
century. For $\delta = 4, 5, 6$, this was only achieved by
I.~Vainsencher \cite{Va} in 1995. In 2001, S.~Kleiman and R.~Piene
\cite{KP} settled the cases $\delta = 7, 8$. Earlier, 
L.~G\"ottsche \cite{Go}
conjectured a more detailed (still not entirely explicit) description
of these polynomials for counting nodal curves on smooth projective algebraic
surfaces.

\subsection*{Main Results}

In this paper we develop, building on ideas of S.~Fomin and
G. Mikhalkin \cite{FM}, an explicit algorithm 
(see Algorithm~\ref{alg:nodepoly}) for computing the
node polynomials $N_\delta(d)$ for arbitrary $\delta$. This
algorithm is used to calculate $N_\delta(d)$ for all $\delta
\le 14$.

\begin{theorem}
\label{thm:nodepolys}
The node polynomials $N_\delta(d)$, for $ \delta \le 14$, are as listed
in Appendix~\ref{app:newpolys}.
\end{theorem}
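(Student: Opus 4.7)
The plan is to prove Theorem \ref{thm:nodepolys} by designing and then executing Algorithm \ref{alg:nodepoly}, which computes $N_\delta(d)$ from the combinatorics of floor diagrams. The starting point is the Fomin--Mikhalkin result that the Severi degree $N^{d,\delta}$ equals a weighted enumeration of marked floor diagrams of degree $d$ with $\delta$ excess genus (or, equivalently, $\delta$ nodes), together with their polynomiality for $d \ge 2\delta$. I would first rewrite this enumeration as a sum over local pieces: each floor diagram decomposes, away from a long ``generic'' middle stretch, into a bounded amount of combinatorial data. Following Fomin--Mikhalkin, I would formalize this data as \emph{templates}, i.e.\ the connected non-trivial portions of the floor diagram after removing vertices that look like the generic floor. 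Each template $T$ has a well-defined cost $\delta(T)$ (its contribution to the nodal count) and a length $\ell(T)$, and one shows that the total $\delta$-contribution of all templates placed on a floor diagram equals $\delta$.

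Next, I would show that the weighted count of floor diagrams with a fixed collection of templates placed at fixed positions along the spine is piecewise polynomial in $d$, and becomes a single polynomial once $d \ge 2\delta$ so that the templates have enough room to be separated. The number of templates with $\delta(T) \le \delta$ is finite, so $N_\delta(d)$ can be assembled as a finite sum
\[
  N_\delta(d) \;=\; \sum_{(T_1,\dots,T_k)} (\text{multiplicity})\cdot(\text{polynomial in } d),
\]
where the outer sum is over unordered multisets of templates with total cost $\delta$. The main combinatorial content of the algorithm is therefore: (i) enumerate all templates $T$ with $\delta(T)\le\delta$; (ii) compute the polynomial weight $P_T(d)$ associated to each template configuration; (iii) sum and simplify. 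Polynomial interpolation is not actually needed, because each template contributes a closed-form polynomial in $d$ arising from choosing the relative positions of templates on the spine subject to separation inequalities.

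The hardest step in practice is (i): the number of templates grows rapidly with $\delta$, and a naive generation produces many isomorphic copies or templates whose contribution is forced to vanish. I would handle this by generating templates inductively in $\delta(T)$, pruning by structural constraints (valence, end-of-spine conditions, divisibility sequences), and by using canonical labelings to detect isomorphism. Verification then has two layers: internally, by checking that partial sums grouped by template count telescope correctly; and externally, by matching $N_\delta(d)$ against the Vainsencher formulas for $\delta\le 6$ and the Kleiman--Piene formulas for $\delta\le 8$, as well as against the first several values tabulated from direct floor-diagram enumeration at small $d$. Once the algorithm passes these cross-checks, running it for each $\delta \le 14$ produces the polynomials tabulated in Appendix~\ref{app:newpolys}, completing the proof.
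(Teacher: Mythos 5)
Your proposal follows essentially the same route as the paper: decompose floor diagrams into Fomin--Mikhalkin templates, express $N^{d,\delta}$ as a finite sum over template collections of closed-form polynomial contributions (so no interpolation is needed), generate all templates of cogenus at most $\delta$, and carry out the computation for $\delta\le 14$ with cross-checks against Vainsencher, Kleiman--Piene, and G\"ottsche. The only differences are implementation-level: the paper sums over \emph{ordered} template collections with nested position sums evaluated symbolically via Faulhaber's formula together with a lemma adjusting the lower summation limits, and it generates templates non-redundantly by an edge-sliding branch-and-bound rather than canonical-labeling isomorphism tests, but these do not change the substance of the argument.
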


P.~Di Francesco and C.~Itzykson \cite{DI} conjectured the first seven
terms of the node polynomial $N_\delta(d)$, for arbitrary $\delta$. We
confirm and extend their assertion. The first two terms already appeared
in \cite{KP}.

\begin{theorem}
\label{thm:firstcoeffs}
The first nine coefficients of $N_{\delta}(d)$ are given by
\begin{displaymath}
\scriptsize
\begin{split}
N_{\delta}(d)  &= \frac{3^\delta}{\delta !} \left [ d^{2\delta}
  -2\delta d^{2\delta-1} - \frac{\delta(\delta-4)}{3}
  d^{2\delta-2}+\frac{\delta (\delta-1)(20\delta-13)}{6}d^{2\delta-3}
  + \right.\\
& - \frac{\delta (\delta-1)(69\delta^2-85\delta+92)}{54} d^{2\delta -4} -
\frac{\delta (\delta-1)(\delta-2)(702\delta^2 - 629\delta - 286)}{270}
d^{2\delta-5} + \\
& + \frac{\delta (\delta-1)(\delta-2)(6028\delta^3 - 15476 \delta^2 +
  11701\delta + 4425)}{3240} d^{2\delta - 6} + \\
& + \frac{\delta (\delta-1)(\delta-2)(\delta-3)(13628 \delta^3 -
  6089\delta^2 - 29572 \delta - 24485)}{11340} d^{2\delta -7} + \\
& \left.- \frac{\delta (\delta-1)(\delta-2)(\delta-3)(282855\delta^4-931146\delta^3 + 417490 \delta^2 +
  425202\delta + 1141616)}{204120} d^{2\delta - 8} + \cdots \right ].
\end{split}
\end{displaymath}
\end{theorem}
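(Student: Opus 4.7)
The plan is to extract the top nine coefficients of $N_\delta(d)$ by refining the Fomin--Mikhalkin floor-diagram description so that only finitely many combinatorial configurations contribute to each individual coefficient. The argument should factor into a combinatorial reduction, a template enumeration, and a final polynomial assembly.

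First, I would rewrite $N_\delta(d)$ as a weighted sum over marked labeled floor diagrams of genus $\binom{d-1}{2} - \delta$, following the floor-decomposition formalism of Fomin--Mikhalkin. The ``trivial'' diagram --- a linear chain of $d$ floors joined by unit-multiplicity elevators in the default order --- serves as the base configuration whose contribution yields the leading term $\frac{3^\delta}{\delta!}d^{2\delta}$.

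Second, following the strategy underlying Algorithm~\ref{alg:nodepoly}, I would formalize the notion of a \emph{template}: a local modification of a long trivial diagram supported on a bounded region, equipped with a numerical invariant $\defect(\Theta)$ measuring the drop in $d$-degree relative to the trivial configuration. The key structural claim is that for every $k$, only finitely many templates have $\defect(\Theta) \le k$, so the coefficient of $d^{2\delta - k}$ in $N_\delta(d)$ is determined by a finite list of templates and therefore can, in principle, be computed in closed form as a function of $\delta$.

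Third, to reach the coefficient of $d^{2\delta - 8}$, I would enumerate all unordered multi-sets of templates of total defect $\le 8$ and, for each multi-set $\{\Theta_1, \ldots, \Theta_r\}$ inserted into a diagram with $\delta$ ``generic'' floors, compute the number of admissible placements. These placement counts are products of binomial coefficients in $\delta$, which expand into polynomials whose $\delta$-degree is bounded by the number and sizes of the templates involved. Summing the weighted contributions across all defect-$\le 8$ configurations then produces each of the nine leading coefficients of $N_\delta(d)$ as an explicit polynomial in $\delta$.

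The main obstacle is the combinatorial explosion at defects $7$ and $8$, where many templates and their combinations must be tabulated with their correct tropical multiplicities, and where nontrivial cancellations bring the $\delta$-degree of each coefficient down to the modest levels apparent in the statement. I would verify the resulting closed form both symbolically and by matching its specializations at $\delta = 1, 2, \ldots, 14$ against the explicit polynomials produced by Theorem~\ref{thm:nodepolys}; agreement at $14 \ge 2 \cdot 9$ values pins down polynomials of degree at most $8$ in $\delta$, which provides strong corroboration of each coefficient.
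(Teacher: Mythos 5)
Your high-level plan is in the same spirit as the paper's proof: template decomposition (after Fomin--Mikhalkin), a defect statistic for collections of templates, the observation that only types of bounded defect contribute to the top coefficients (this is the paper's Lemma~\ref{lem:degree}), and a check against Theorem~\ref{thm:nodepolys}. But there is a genuine gap at the step where you assert that the ``placement counts are products of binomial coefficients in $\delta$, which expand into polynomials.'' The contributions of a template collection are not given by simple placement counts; they arise from the iterated discrete integral in~\eqref{eqn:5.13}, which involves the template polynomials $P(\Gamma_i,k_i)$ summed over nested ranges whose upper limits depend on $d$. Extracting the dependence on $\delta$ from these nested Faulhaber-type sums is the hard part, and it is not an elementary binomial-coefficient count. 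The paper handles it by encoding the discrete integration as lower-triangular matrices $M_i(a)$ acting on coefficient vectors $C_\tau(\delta)$ and setting up a recursion in $\delta$ (Proposition~\ref{prop:rec}). That recursion reduces, coordinatewise, to a polynomial difference equation~\eqref{eqn:localrec} of the form $(2\delta - \mathrm{def}(\tau) - j + 1)\,3\,p(\delta) = p(\delta-1) + q(\delta)$, and it is \emph{not} automatic that such a recursion has a polynomial solution in $\delta$. This is precisely Conjecture~\ref{conj:polyn}, which the paper explicitly verifies for all types with defect $\le 8$ --- exactly the range needed for nine coefficients. Your proposal never confronts this issue, so as written it has no mechanism producing the closed-form $\delta$-polynomials.

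A secondary issue: the interpolation sanity check at the end is a reasonable idea, but the inequality ``$14 \ge 2\cdot 9$'' is false, and more importantly interpolation can only confirm the answer once you have (a) established that the rescaled coefficient is a polynomial in $\delta$ and (b) bounded its degree; both facts are what the recursion and Conjecture~\ref{conj:polyn} provide and what your argument leaves unproved. With those supplied, matching the nine coefficient polynomials (each of degree $\le 8$ in $\delta$) against the explicit $N_\delta(d)$ for $\delta \le 14$ from Theorem~\ref{thm:nodepolys} does give an honest over-determined verification.
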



Let $d^*(\delta)$ denote the \emph{polynomiality threshold} for Severi
degrees, i.e., the smallest positive integer $d^* = d^*(\delta)$ such
that $N_\delta(d) = N^{d, \delta}$ for $d \ge d^*$. As mentioned above
S.~Fomin and G.~Mikhalkin showed that $d^* \le 2 \delta$. We improve
this as follows:

\begin{theorem}
\label{thm:threshold}
For $\delta \ge 1$, we have $d^*(\delta) \le \delta$.
\end{theorem}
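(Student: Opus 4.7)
The plan is to work within the floor diagram and template formalism of Fomin--Mikhalkin, as refined in this paper. In that setup, the Severi degree $N^{d,\delta}$ is expressed as a weighted sum indexed by collections $(\Gamma_1,\ldots,\Gamma_k)$ of templates whose cogenera sum to $\delta$, together with a placement vector specifying how each template is overlaid on a sequence of $d$ consecutive floors. For a fixed collection of templates, the enumeration of admissible placement vectors is a polynomial in $d$ as soon as $d$ exceeds the total length $\sum_i \ell(\Gamma_i)$, since no further truncation or overlap constraints then come into play. To bound the polynomiality threshold it therefore suffices to control total template length in terms of total cogenus.

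The first step is to reduce Theorem~\ref{thm:threshold} to the structural inequality
\[
\ell(\Gamma) \le \delta(\Gamma)
\]
for every template $\Gamma$ that appears in the expansion of $N^{d,\delta}$. Given this, any collection with $\sum_i \delta(\Gamma_i) = \delta$ automatically satisfies $\sum_i \ell(\Gamma_i) \le \delta$, so the polynomial formula agrees with $N^{d,\delta}$ already for $d \ge \delta$. The Fomin--Mikhalkin threshold $d^*(\delta) \le 2\delta$ is essentially the consequence of the weaker inequality $\ell(\Gamma) \le 2\delta(\Gamma)$, so the goal is to sharpen this bound by a factor of two.

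The second step is to prove the sharp inequality by direct combinatorial analysis. A template is a short weighted graph on $\ell(\Gamma)+1$ consecutive floors with prescribed left and right boundary weights, and its cogenus records the excess of total edge weight over that of the trivial profile. The plan is to argue that each unit of length in a template necessarily costs at least one unit of cogenus: if extending a template by one floor introduced no new excess weight, then the added floor would have no outgoing or incoming edge of weight above the trivial value and the template would factor through a shorter one, contradicting the minimality condition built into the definition. This should reduce to a short case analysis on the possible weight profiles at the newly added rightmost (or leftmost) floor.

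The main obstacle is precisely this sharpened structural inequality. The case analysis is delicate because templates can carry several different local configurations, including simple edges, multi-edges, and higher multiplicities, and one must verify uniformly that every one-floor extension costs at least one unit of cogenus. Once the inequality $\ell(\Gamma) \le \delta(\Gamma)$ is established, Theorem~\ref{thm:threshold} follows immediately by summing it over the templates in any contributing collection and comparing with the definition of $d^*(\delta)$.
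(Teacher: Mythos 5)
Your proposal has a genuine gap, and in fact the central structural inequality you propose to prove is false. Consider the template on vertices $\{0,1,2\}$ consisting of a single edge of weight $1$ from $0$ to $2$ (the second entry in Figure~\ref{fig:templates}). Its cogenus is $\delta(\Gamma) = (2-0)\cdot 1 - 1 = 1$, while its length is $\ell(\Gamma) = 2$, so $\ell(\Gamma) > \delta(\Gamma)$. What is true is the weaker bound $\ell(\Gamma) - \eps(\Gamma) \le \delta(\Gamma)$ (where $\eps(\Gamma)$ detects whether all edges arriving at the rightmost vertex have weight one), but this alone is not enough for the theorem.

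The deeper issue is that the threshold for polynomiality of (\ref{eqn:5.13}) is not governed by $\sum_i \ell(\Gamma_i)$ alone. The lower limit of the $i$th iterated sum involves $k_{\min}(\Gamma_i)$, which can be large (e.g., a single edge of weight $w$ from $0$ to $1$ has $k_{\min}=w=\delta(\Gamma)+1$ and $\ell=1$), and your argument does not engage with this quantity at all. The paper's proof instead rests on three ingredients: a robustness lemma for discrete anti-derivatives (Lemma~\ref{lem:discretelem}), the observation that $P(\Gamma,k)$ vanishes at the $s(\Gamma)$ integers just below $k_{\min}(\Gamma)$ (Lemma~\ref{lem:templatepolyevaluations}), and the sharp combinatorial inequality $k_{\min}(\Gamma) - s(\Gamma) + \ell(\Gamma) - \eps(\Gamma) \le \delta(\Gamma) + 1$ (Lemma~\ref{lem:technicalinequality}). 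It is precisely the ``extra'' vanishing of $P(\Gamma,k)$ below $k_{\min}$ that lets one push the polynomiality window down to $d\ge\delta$, and this phenomenon has no analogue in a pure length-counting argument. You would need to incorporate both $k_{\min}$ and the vanishing of the template polynomials to close the gap.
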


In other words, $N^{d, \delta} = N_\delta(d)$ provided $d \ge \delta
\ge 1$. L.~G\"ottsche \cite [Conjecture 4.1]{Go} conjectured that $d^*
\le \left \lceil\frac{\delta}{2} \right \rceil + 1$ for $\delta \ge 1$. This was verified for
$\delta \le 8$ by S. Kleiman and R. Piene \cite{KP}. By direct computation we can push it further.

\begin{proposition}
\label{prop:Gthreshold}
For $3 \le \delta \le 14$, we have $d^*(\delta) = \left \lceil\frac{\delta}{2} \right \rceil + 1$.
\end{proposition}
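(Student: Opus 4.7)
The plan is to verify both inequalities $d^*(\delta) \le \lceil \delta/2 \rceil + 1$ and $d^*(\delta) \ge \lceil \delta/2 \rceil + 1$ for each individual value $\delta \in \{3,4,\ldots,14\}$, by direct computation. Since Göttsche's inequality is already a conjectural upper bound, the interesting content is showing that no earlier value of $d$ works.

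For the upper bound, I would proceed as follows. By Theorem~\ref{thm:threshold} we already know that $N^{d,\delta} = N_\delta(d)$ holds for all $d \ge \delta$, so only the finitely many pairs $(\delta,d)$ with $\lceil \delta/2 \rceil + 1 \le d \le \delta - 1$ need to be checked. For each such pair, I would compute the Severi degree $N^{d,\delta}$ directly (for instance via the Caporaso--Harris recursion, or via the floor-diagram count underlying Fomin--Mikhalkin's work, which is also the combinatorial engine behind Algorithm~\ref{alg:nodepoly}), then compare with the numerical value obtained by plugging $d$ into the explicit polynomial $N_\delta(d)$ from Theorem~\ref{thm:nodepolys}. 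Agreement in every case yields $d^*(\delta) \le \lceil \delta/2 \rceil + 1$.

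For the matching lower bound, I would, for each $\delta$ in the range $3 \le \delta \le 14$, compute the actual Severi degree $N^{d_0, \delta}$ at the single value $d_0 = \lceil \delta/2 \rceil$ using the same direct method, and compare it to the polynomial evaluation $N_\delta(d_0)$. The assertion is that these two integers always disagree; this inequality, combined with the upper bound above, forces $d^*(\delta) = \lceil \delta/2 \rceil + 1$.

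The main obstacle is computational rather than conceptual: Algorithm~\ref{alg:nodepoly} produces the polynomial $N_\delta(d)$ but does not by itself certify the polynomiality threshold. One must therefore have an independent way of producing $N^{d,\delta}$ at sub-threshold values of $d$, and the number of floor diagrams (or Caporaso--Harris terms) grows rapidly in $\delta$, so the case $\delta = 14$, $d_0 = 8$ is the computationally heaviest. Once this single enumeration is organized efficiently (using the same floor-diagram data structures developed for the algorithm, restricted to the finitely many diagrams of fixed degree $d_0$ and cogenus $\delta$), the remainder of the proof is tabulation and comparison.
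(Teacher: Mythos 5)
Your proposal matches the paper's proof: both use Theorem~\ref{thm:threshold} to reduce to the finitely many pairs with $\lceil\delta/2\rceil+1\le d\le\delta-1$, verify $N^{d,\delta}=N_\delta(d)$ there, and then show $N^{d_0-1,\delta}\neq N_\delta(d_0-1)$ to get sharpness, all by direct tabulation. The only small clarification worth making is that the paper does not need a genuinely ``independent'' method for the sub-threshold Severi degrees as you suggest: Theorem~\ref{thm:severiformula} (and hence the sums inside Algorithm~\ref{alg:nodepoly}) is valid for all $d,\delta\ge 1$, so one simply evaluates those finite sums numerically at the prescribed $d$ (Remark~\ref{rmk:numericalalgo}); this is exactly how the tables in Appendix~\ref{app:smallSevdegs} were produced.
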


That is, G\"ottsche's threshold is correct and sharp for $3 \le \delta
\le 14$. For $\delta = 1, 2$ it is easy to see that $d^*(1) = 1$ and
$d^*(2) = 1$.

P.~Di Francesco and C.~Itzykson \cite{DI} hypothesized that
$d^*(\delta) \le
\left \lceil
\frac{3}{2} +\sqrt{2\delta+\frac{1}{4}} \right \rceil$ 
(which is
equivalent to $\delta \le \frac{(d^*-1)(d^*-2)}{2}$). However, our
computations show that this fails for $\delta = 13$ as $d^*(13) = 8$.



The main techniques of this paper are combinatorial. By the celebrated
Correspondence Theorem of G.~Mikhalkin
\cite[Theorem 1]{Mi03} one can replace the algebraic curve count by an enumeration
of certain \emph{tropical curves}. E.~Brugall\'e and G.~Mikhalkin
\cite{BM1, BM2} introduced some purely combinatorial gadgets, called
{\it (marked) labeled floor diagrams} (see
Section~\ref{sec:floordiagrams}), which, if counted correctly, are equinumerous to these
tropical curves. Recently, S.~Fomin and G.~Mikhalkin \cite{FM}
enhanced Brugall\'e and Mikhalkin's definition and introduced a
\emph{template
decomposition} of labeled floor diagrams which is crucial in the proofs
of all results in this paper, as is the reformulation of algebraic
plane curve counts in terms of labeled floor diagrams (see Theorem~\ref{thm:floorcount}).

This paper is organized as follows: In Section~\ref{sec:floordiagrams}
we review labeled floor diagrams, their markings, and their
relationship with 
the enumeration of plane algebraic curves.
The proofs of Theorems~\ref{thm:nodepolys} and~\ref{thm:firstcoeffs}
are algorithmic in nature and involve a computer computation. We
describe both algorithms in detail in Sections~\ref{sec:nodepolys}
and~\ref{sec:coefficients}, respectively. The first algorithm computes
the node
polynomials $N_\delta(d)$ for arbitrary $\delta$, the second
determines a prescribed number of leading terms of
$N_\delta(d)$. The latter algorithm relies on the polynomiality of
solutions of certain polynomial
difference equations: This polynomiality has been verified for
pertinent values of $\delta$ (see Section~\ref{sec:coefficients}). 
Proposition~\ref{prop:Gthreshold} is proved in Section~\ref{sec:nodepolys} by
comparison of the numerical values of $N_\delta(d)$ and $N^{d,
  \delta}$ for various $d$ and $\delta$ (see
Appendices~\ref{app:newpolys} and \ref{app:smallSevdegs}).
Theorem~\ref{thm:threshold} is proved in Section~\ref{sec:thresholdvalues}.

\subsection*{Competing Approaches: Floor Diagrams vs.\ Caporaso-Harris
recursion}

An alternative approach to computing the node polynomials
$N_\delta(d)$ combines polynomial interpolation with the Caporaso-Harris
recursion~\cite{CH98}. Once a polynomiality threshold
$d_0(\delta)$ has been established (i.e., once we have proved that
$N_\delta(d) = N^{d, \delta}$ for $d \ge d_0(\delta)$), we can use the
recursion to determine a sufficient number of Severi degrees $N^{d,
  \delta}$ for $d\ge d_0(\delta)$, from  which we then interpolate.

This approach was first used by L.~G\"ottsche~\cite[Remark~4.1(1)]{Go}.
He conjectured \cite[Conjecture~4.1]{Go}
the polynomiality threshold 
$d_0(\delta)=\lceil \frac{\delta}{2} \rceil+ 1$,
and combined it with 
the ``G\"ottsche-Yau-Zaslow formula''
\cite[Conjecture~2.4]{Go} (now a theorem of Y.-J.\ Tzeng~\cite{Tz10})
to calculate the putative node polynomials $N_\delta(d)$ for $\delta
\le 28$. The G\"ottsche-Yau-Zaslow formula is a stronger version of
polynomiality that allows one to compute each next node polynomial by
calculating only two additional Severi degrees
$N^{d_0(\delta), \delta}$ and $N^{d_0(\delta)+1, \delta}$, 
which is done via the Caporaso-Harris formula. 
Since G\"ottsche's threshold $d_0(\delta) =
\lceil \frac{\delta}{2} \rceil + 1$ remains open as of this writing,
the algorithm he used to compute the node polynomials
is still awaiting a rigorous justification. 

The first polynomiality threshold 
$d_0(\delta) = 2 \delta$ was established by S.~Fomin and
G.~Mikhalkin~\cite[Theorem~5.1]{FM}.
Using this result, one can compute $N_\delta(d)$ for
$\delta \le 9$ but hardly any
further\footnote{We used an efficient C implementation of the
  Caporaso-Harris recursion by A.~Gathmann.}.
With the threshold $d_0(\delta) = \delta$ established in 
Theorem~\ref{thm:threshold}, it should be possible to compute 
$N_\delta(d)$ for $\delta\le 16$ or perhaps $\delta\le 17$.

By contrast, our Algorithm~\ref{alg:nodepoly} does not involve
interpolation nor does it require an \emph{a priori} knowledge of a
polynomiality threshold. 
Our computations verify the results of 
L.~G\"ottsche's calculations for $\delta \le 14$.
In our implementations,
Algorithm~\ref{alg:nodepoly} is roughly as efficient as the
interpolation 
method discussed above. 
(We repeat that the latter method depends on 
the threshold obtained using floor diagrams.)

\subsection*{Gromov-Witten invariants}

The \emph{Gromov-Witten invariant} $N_{d,g}$ enumerates
irreducible plane curves of degree~$d$ and genus~$g$ through
$3d+g-1$ generic points in~$\PP^2$. Algorithm~\ref{alg:nodepoly} (with
minor adjustments, cf.\,Theorem \ref{thm:floorcount}(2)) can be used to directly compute
$N_{d,g}$, without resorting to a recursion involving relative
Gromov-Witten invariants \emph{\`a la} Caporaso--Harris \cite{CH98}.

\subsection*{Follow-up work}

By extending ideas of S.~Fomin and G.~Mikhalkin \cite{FM} and of the present paper,
we can obtain polynomiality results for
\emph{relative Severi degrees}, the degrees of \emph{generalized
  Severi varieties} (see ~\cite{CH98, Va00}).  This is discussed in the
separate paper~\cite{FB10}; see Remark~\ref{rmk:relativeremark}.

A.~Gathmann, H.~Markwig and the author ~\cite{BGM10} defined Psi-floor
diagrams which enumerate
plane curves satisfying point and tangency conditions as well as 
conditions given by \emph{Psi-classes}. We prove a Caporaso-Harris type recursion
for Psi-floor diagrams, and show that 
relative descendant Gromov-Witten invariants equal their tropical counterparts.


\subsection*{Acknowledgements}
I am thankful to Sergey Fomin for suggesting this problem and fruitful guidance. I
also thank the  anonymous referee, Erwan Brugall\'e, Grigory Mikhalkin
and Gregg Musiker for valuable
comments and suggestions.  Part of this work was accomplished at the MSRI (Mathematical
  Sciences Research Institute) in Berkeley, CA, USA, during the
  semester program on tropical geometry. I thank MSRI for hospitality.

\section{Labeled Floor Diagrams}
\label{sec:floordiagrams}

Labeled floor diagrams are combinatorial gadgets which, if counted
correctly, enumerate plane curves with certain prescribed
properties. E.~Brugall\'e and G.~Mikhalkin
introduced them in
\cite{BM1} (in slightly different notation) and studied them further in \cite{BM2}. To keep this paper
self-contained and to fix notation we review them and their markings following
\cite{FM} where the framework that best suits our purposes was introduced.

\begin{definition}
A \emph{labeled floor diagram} $\D$ on a vertex set $\{1, \dots, d\}$ is a directed graph (possibly with 
multiple edges) with positive integer edge weights $w(e)$ satisfying:
\begin{enumerate}
\item The edge directions respect the order of the
  vertices, i.e., for each edge $i \to j $ of $\D$ we have $i
  < j$.
\item (Divergence Condition) For each vertex $j$ of $\D$, we have 
\[
\text{div}(j) \stackrel{\text{def}}{=} \sum_{ \tiny
     \begin{array}{c}
  \text{edges }e\\
j \stackrel{e}{\to} k
     \end{array}
} w(e) -   \sum_{ \tiny
     \begin{array}{c}
  \text{edges }e\\
i \stackrel{e}{\to} j
     \end{array}
} w(e)\le 1.
\]
\end{enumerate}
This means that at every vertex of $\D$ the total weight of the outgoing edges
is larger by at most 1 than the total weight of the incoming edges.
\end{definition}

The \emph{degree} of a labeled floor
diagram $\D$ is the number of its vertices. It is
\emph{connected} if its underlying graph is. Note that in \cite{FM}
labeled floor diagrams are required to be connected. If $\D$ is
connected its \emph{genus} 
is
the genus of the underlying graph
 (or the first Betti number of the underlying topological space). The \emph{cogenus}
of a connected labeled floor diagram $\D$
of degree $d$ and genus $g$ is given by
$\delta(\D) = \frac{(d-1)(d-2)}{2} - g$. If $\D$ is not connected, let
$d_1, d_2, \dots$ and $\delta_1, \delta_2, \dots$ be the degrees and
cogenera, respectively, of its connected components. Then the \emph{cogenus} of $\D$
is $\sum_j \delta_j + \sum_{j < j'} d_j d_{j'}$. Via the
correspondence between algebraic curves and labeled floor diagrams
(\cite[Theorem 3.9]{FM}) these notions correspond literally to
the respective analogues for algebraic curves. Connectedness
corresponds to irreducibility. Lastly, a labeled floor
diagram $\D$ has \emph{multiplicity}\footnote{If floor diagrams are
  viewed as floor contractions of tropical plane curves this
corresponds to the notion of multiplicity of tropical plane
curves.}
\[
\mu(\D) = \prod_{\text{edges }e} w(e)^2.
\]

We draw labeled floor diagrams using the convention that vertices in
increasing order are arranged left to right. Edge weights of $1$
are omitted.

\begin{example}
\label{ex:floordiagram}
An example of a labeled floor diagram of degree $d = 4$, genus $ g=1$,
 cogenus $\delta = 2$, divergences $1,1,0,-2$, and multiplicity $\mu =
 4$ is drawn below.
\begin{equation*}
\label{eq:floordiagram}
\begin{picture}(50,34)(30,-15)\setlength{\unitlength}{4pt}\thicklines
\oooo\Eeee\eEee\eeOe
\put(15,1.5){\makebox(0,0){$2$}} 
\put(7,0){\vector(1,0){1}} 
\put(17,0){\vector(1,0){1}} 
\put(27.5,1.75){\vector(2,-1){1}}
\put(27.5,-1.75){\vector(2,1){1}}
\end{picture}
\end{equation*}
\end{example}

To enumerate algebraic curves via labeled floor
diagrams we need the notion of markings of such diagrams.

\begin{definition}
A \emph{marking} of a labeled floor diagram $\D$ is defined by the following
three step process which we illustrate in the case of Example~\ref{ex:floordiagram}.

{\bf Step 1:} For each vertex $j$ of $\D$ create $1- div(j)$ many new vertices and
connect them to $j$ with new edges directed away from $j$.

\begin{center}
\begin{picture}(50,28)(40,-19)\setlength{\unitlength}{4pt}\thicklines
\oooo\Eeee\eEee\eeOe
\put(15,1.5){\makebox(0,0){$2$}} 
\put(7,0){\vector(1,0){1}} 
\put(17,0){\vector(1,0){1}} 
\put(27.5,1.75){\vector(2,-1){1}}
\put(27.5,-1.75){\vector(2,1){1}}

\put(20.7,-0.7){\line(1,-1){4}}
\put(20.7,-0.7){\vector(1,-1){3}}
\put(25,-5){\circle*{2}}
\put(30.6,-0.8){\line(1,-1){4}}
\put(30.8,-0.6){\line(2,-1){9}}
\put(30.9,-0.4){\line(3,-1){14}}
\put(30.6,-0.8){\vector(1,-1){2.5}}
\put(30.8,-0.6){\vector(2,-1){5}}
\put(30.9,-0.4){\vector(3,-1){7.5}}
\put(35,-5){\circle*{2}}
\put(40,-5){\circle*{2}}
\put(45,-5){\circle*{2}}
\end{picture}
\end{center}

{\bf Step 2:} Subdivide each edge of the original labeled floor
diagram $\D$ into two
directed edges by introducing a new
vertex for each edge. The new edges inherit their weights and orientations. Call the
resulting graph $\tilde{\D}$.

\begin{center}
\begin{picture}(50,34)(45,-20)\setlength{\unitlength}{4pt}\thicklines
\oooo\Eeee\eEee\eeOe
\put(12.5,2){\makebox(0,0){$2$}}
\put(17.5,2){\makebox(0,0){$2$}}
\put(2.5,0){\vector(1,0){1}}
\put(7.5,0){\vector(1,0){1}}
\put(12.5,0){\vector(1,0){1}}
\put(17.5,0){\vector(1,0){1}}
\put(27.5,1.75){\vector(2,-1){1}}
\put(27.5,-1.75){\vector(2,1){1}}
\put(22.5,1.75){\vector(2,1){1}}
\put(22.5,-1.75){\vector(2,-1){1}}

\put(5,0){\circle*{2}}
\put(15,0){\circle*{2}}
\put(25,2.5){\circle*{2}}
\put(25,-2.5){\circle*{2}}
\put(20.7,-0.7){\line(1,-1){4}}
\put(20.7,-0.7){\vector(1,-1){3}}
\put(30.6,-0.8){\line(1,-1){4}}
\put(30.8,-0.6){\line(2,-1){9}}
\put(30.9,-0.4){\line(3,-1){14}}
\put(30.6,-0.8){\vector(1,-1){2.5}}
\put(30.8,-0.6){\vector(2,-1){5}}
\put(30.9,-0.4){\vector(3,-1){7.5}}
\put(25,-5){\circle*{2}}
\put(35,-5){\circle*{2}}
\put(40,-5){\circle*{2}}
\put(45,-5){\circle*{2}}
\end{picture}
\end{center}

{\bf Step 3:} Linearly order the vertices of $\tilde{\D}$
extending the order of the vertices of the original labeled floor
diagram $\D$ such that, as before, each edge is directed from a
smaller vertex to a larger vertex.

\begin{center}
\begin{picture}(50,40)(65,-15)\setlength{\unitlength}{4pt}\thicklines
\put(12.5,2){\makebox(0,0){$2$}}
\put(17.5,2){\makebox(0,0){$2$}}
\multiput(0,0)(10,0){3}{\circle{2}}
\multiput(40,0)(10,0){1}{\circle{2}}
\multiput(5,0)(10,0){5}{\circle*{2}}
\multiput(30,0)(10,0){1}{\circle*{2}}
\multiput(50,0)(5,0){2}{\circle*{2}}
\put(1,0){\line(1,0){8}}
\put(11,0){\line(1,0){8}}
\put(2.5,0){\vector(1,0){1}}
\put(7.5,0){\vector(1,0){1}}
\put(12.5,0){\vector(1,0){1}}
\put(17.5,0){\vector(1,0){1}}
\put(21,0){\line(1,0){3}}
\put(22.5,0){\vector(1,0){1}}
\qbezier(20.6,0.6)(22,6)(25,6)\qbezier(25,6)(28,6)(29.4,0.6)
\put(25,6){\vector(1,0){1}}
\qbezier(20.8,-0.6)(23.75,-3)(27.5,-3)\qbezier(27.5,-3)(32.25,-3)(34.2,-0.6)
\put(27.5,-3){\vector(1,0){1}}
\qbezier(25.8,0.6)(28.75,3)(32.5,3)\qbezier(32.5,3)(37.25,3)(39.2,0.6)
\put(32.5,3){\vector(1,0){1}}
\put(36,0){\line(1,0){3}}
\put(37.5,0){\vector(1,0){1}}
\put(41,0){\line(1,0){3}}
\put(42.5,0){\vector(1,0){1}}
\qbezier(40.8,0.6)(42,3)(45,3)\qbezier(45,3)(48,3)(49.2,0.6)
\put(45,3){\vector(1,0){1}}
\qbezier(40.8,0.6)(43.5,5)(47.5,5)\qbezier(47.5,5)(51.5,5)(54.2,0.6)
\put(47.5,5){\vector(1,0){1}}

\end{picture}
\end{center}

The extended graph $\tilde{\D}$ together with the linear order on
its vertices is called a \emph{marked floor diagram}, or a
\emph{marking} of the original labeled floor diagram $\D$.

\end{definition}
We want to count marked floor diagrams up to equivalence. Two markings
$\tilde{\D}_1$, $\tilde{\D}_2$ of a labeled floor diagram $\D$ are \emph{equivalent} if there exists an 
automorphism of weighted graphs which preserves the vertices of $\D$ and maps $\tilde{\D}_1$ to
$\tilde{\D}_2$. The \emph{number of markings} $\nu(\D)$ is the number of
marked floor diagrams $\tilde{\D}$ up to equivalence.

\begin{example}
The labeled floor diagram $\D$ of Example~\ref{ex:floordiagram} has $\nu(\D) =
7$ markings (up to equivalence): In step 3 the extra $1$-valent vertex connected to the third white vertex
from the left can be inserted in three ways between the third and fourth
white vertex (up to equivalence) and in four ways right of the fourth
white vertex (again up to equivalence).
\end{example}

Now we can make precise how to compute Severi degrees $N^{d, \delta}$ and
Gromov-Witten invariants $N_{d,g}$
in terms of combinatorics of labeled floor diagrams, thereby
reformulating the initial question of this paper. Part
\ref{itm:GWcount} is not needed in the sequel and only included for
completeness. It first appeared in \cite[Theorem 1]{BM1}.

\begin{theorem}
  \cite[Corollary 1.9, Theorem 1.6]{FM}
\label{thm:floorcount}
$\left. \right.$
\begin{enumerate}
\item The Severi degree $N^{d, \delta}$, i.e., the number of possibly
  reducible nodal curves in $\PP^2$ of degree $d$ with $\delta$ nodes
  through $\frac{d(d+3)}{2} - \delta$ generic points,
  is equal to
\[
N^{d,\delta} = \sum_\D \mu(\D) \nu(\D),
\]
where $\D$ runs over all possibly disconnected labeled floor diagrams of degree
$d$ and cogenus $\delta$.
\smallskip
\item \label{itm:GWcount}The Gromov-Witten invariant $N_{d,g}$, i.e., the number of
  irreducible curves in $\PP^2$ of degree $d$ and genus $g$
  through $3d+g-1$ generic points, is equal to
\[
N_{d,g} = \sum_\D \mu(\D) \nu(\D),
\]
where $\D$ runs over all connected labeled floor diagrams of degree
$d$ and genus~$g$.
\end{enumerate}
\end{theorem}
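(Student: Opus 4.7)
The plan is to deduce the theorem from Mikhalkin's Correspondence Theorem together with a careful analysis of tropical plane curves passing through a \emph{vertically stretched} configuration of points. By the Correspondence Theorem, $N^{d,\delta}$ equals the number of irreducible-or-reducible tropical plane curves of degree $d$ and cogenus $\delta$ passing through a generic configuration of $\frac{d(d+3)}{2}-\delta$ points in $\mathbb{R}^2$, each counted with its tropical multiplicity (the product of the squared weights of its bounded edges, divided by certain automorphism factors); similarly $N_{d,g}$ enumerates connected such curves. Thus one may choose \emph{any} generic point configuration, and I would choose one in which successive points are separated by huge vertical distances relative to horizontal ones.

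First I would recall the floor-decomposition phenomenon of Brugall\'e--Mikhalkin: for a sufficiently vertically stretched configuration, every tropical curve $C$ through those points decomposes into horizontal ``floors'' joined by nearly vertical edges, with each floor containing exactly one marked point and each vertical edge either carrying a marked point on it or being capped by an unbounded ray containing one. Collapsing each floor to a single vertex and ordering vertices by height gives a directed weighted graph on $d$ vertices. The divergence condition at a vertex is forced by the balancing condition of the floor (together with the single horizontal unbounded ray of weight 1 emanating to the right from each floor in degree $d$), and positivity of weights is automatic. This is precisely a labeled floor diagram $\D$ in the sense of Section~\ref{sec:floordiagrams}. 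Conversely, given $\D$ one can reconstruct a tropical curve up to the data of where on its floors/edges the marked points sit.

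Next I would encode the choice of where the $\tfrac{d(d+3)}{2}-\delta$ points sit on $C$. Each floor contains one point; the remaining points lie on, or are attached as terminal rays to, the edges of $\D$. Reading the relative heights of these ``extra'' points together with the vertex heights of $\D$ produces exactly a marking $\widetilde\D$ as defined in Steps 1--3 of Section~\ref{sec:floordiagrams}: Step~1 accounts for points lying on unbounded rays emanating from a floor (whose number at vertex $j$ equals $1-\mathrm{div}(j)$), Step~2 records a marked point on an internal edge of $\D$, and Step~3 is just the height order. Two configurations give the same tropical curve and point-placement iff they differ by a weighted-graph automorphism fixing the floor vertices, so the number of tropical curves with underlying floor diagram $\D$ is $\nu(\D)$.

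Finally I would compute the multiplicity contribution. The tropical multiplicity of $C$ is the product over all its bounded edges of the squared weights, but edges incident to an unbounded ray or to a univalent marked vertex contribute weight $1$ (since unbounded directions in degree $d$ have weight $1$, and any newly-created edge in Steps~1--2 inherits the weight of its parent or is a weight-$1$ ray). The surviving contribution factors precisely as $\mu(\D)=\prod_{e}w(e)^2$ over the edges of the original $\D$. Summing $\mu(\D)\nu(\D)$ over $\D$ then yields the claimed formulas: over all labeled floor diagrams (possibly disconnected) of degree $d$ and cogenus $\delta$ for part~(1), and over connected ones of degree $d$ and genus $g$ for part~(2), since connectedness of $\D$ is equivalent to irreducibility of the corresponding tropical (hence algebraic) curve. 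The main technical obstacle is justifying the floor decomposition rigorously: one must show that for a sufficiently stretched configuration, \emph{every} tropical curve through it actually decomposes as above, with no ``tilted'' edges and no accidental coincidences of heights, and that the local tropical multiplicity at each floor factorizes exactly into the edge-weight contributions listed. This is the technical heart of \cite{BM1,FM} and would occupy the bulk of a self-contained proof.
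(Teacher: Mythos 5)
The paper does not prove this theorem: it is quoted verbatim with a citation to Fomin--Mikhalkin \cite{FM} (their Corollary~1.9 and Theorem~1.6), which in turn rest on Brugall\'e--Mikhalkin \cite{BM1,BM2} and Mikhalkin's Correspondence Theorem \cite{Mi03}. So there is no in-paper argument against which to compare, and what you have written is a sketch of the proof that lives in the cited sources. As such a sketch it is essentially correct and follows the same route: reduce to tropical enumeration via the Correspondence Theorem, work with a vertically stretched generic configuration so every tropical curve through it floor-decomposes, collapse the floors to obtain a labeled floor diagram $\D$, recover a marking of $\D$ from the height order of the marked points (with the $1-\mathrm{div}(j)$ extra rays per floor and the midpoints of elevator edges), and factor the multiplicity as $\mu(\D)$.

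One point to tighten in a full write-up: you describe the tropical multiplicity as ``the product over all bounded edges of the squared weights,'' but that is not its definition --- the multiplicity is defined as a product of lattice-area multiplicities over the trivalent \emph{vertices} of the tropical curve. That this product collapses, for a floor-decomposed curve, to $\prod_e w(e)^2$ over the elevator edges of $\D$ (each elevator of weight $w$ contributing a factor of $w$ at each of its two endpoint vertices) is a lemma of \cite{BM1}, not a definition. You also rightly identify the real technical burden: proving that for a sufficiently stretched configuration \emph{every} tropical curve through it is floor-decomposed, with floors carrying exactly one marked point each and elevators/rays carrying the rest, is where the work is, and that is precisely what \cite{BM1} and \cite{FM} supply.
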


\section{Computing Node Polynomials}
\label{sec:nodepolys}
In this section we give an explicit algorithm that symbolically
computes the node polynomials $N_\delta(d)$, for given
$\delta \ge 1$. (As $N^{d, 0} = 1$ for $d \ge 1$, we put $N_0(d) = 1$.) An implementation of this algorithm was used to prove
Theorem~\ref{thm:nodepolys} and Proposition~\ref{prop:Gthreshold}.
 We mostly follow the notation in
\cite[Section 5]{FM}. First, we rephrase
Theorem~\ref{thm:nodepolys} in
more compact notation. For $\delta \le 8$ one recovers
\cite[Theorem~3.1]{KP}. For $\delta \le 14$ this coincides with the conjectural
formulas of \cite[Remark~2.5]{Go}.

\begin{theorem}
\label{thm:nodepolynomials}
The node polynomials $N_\delta(d)$, for $\delta \le 14$, are given by
the generating function $\sum_{\delta \ge 0}N_\delta(d) x^\delta$ via
the transformation
\[
\sum_{\delta \ge 0}N_\delta(d) x^\delta =
\exp \bigg( \sum_{\delta \ge 1 }Q_\delta(d) x^\delta \bigg ),
\]
where
\begin{equation*}
\small
\begin{split}
Q_1(d) & = 3(d-1)^2, \\
Q_2(d) & =  \tfrac{-3}{2}(d-1)(14d-25),\\
Q_3(d) & =  \tfrac{1}{3} (690 d^2-2364 d + 1899),\\
Q_4(d) & =  \tfrac{1}{4} (-12060 d^2 + 47835 d -45207),\\
Q_5(d) & =  \tfrac{1}{5}(217728 d^2 - 965646 d +1031823),\\
Q_6(d) & =  \tfrac{1}{6}(- 4010328 d^2  + 19451628 d -22907925 ),\\
Q_7(d) & =  \tfrac{1}{7}(74884932 d^2  - 391230216 d + 499072374),\\
Q_8(d) & =  \tfrac{1}{8}( - 1412380980 d^2  + 7860785643 d -10727554959),\\
Q_9(d) & =  \tfrac{1}{9}(26842726680 d^2  - 157836614730 d + 228307435911),\\
Q_{10}(d) & =  \tfrac{1}{10}(- 513240952752 d^2  + 3167809665372 d -4822190211285),\\
Q_{11}(d) & =  \tfrac{1}{11}(9861407170992 d^2  - 63560584231524 d + 101248067530602),\\
Q_{12}(d) & =  \tfrac{1}{12}(- 190244562607008 d^2  + 1275088266948600 d -2115732543025293),\\
Q_{13}(d) & =  \tfrac{1}{13}( 3682665360521280 d^2  -
25576895657724768 d + 44039919476860362),\\
Q_{14}(d) & =  \tfrac{1}{14}(-71494333556133600 d^2  +
513017995615177680 d - 913759995239314452).
\end{split}
\end{equation*}
In particular, all $Q_\delta(d)$, for $1 \le \delta \le 14$, are
quadratic in $d$.
\end{theorem}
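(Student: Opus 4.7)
The plan is to reduce Theorem~\ref{thm:nodepolynomials} to Theorem~\ref{thm:nodepolys} by applying the formal $\log$--$\exp$ transform in the variable $x$. First I would compute $N_\delta(d)$ for $0\le \delta\le 14$ (with $N_0(d)=1$) explicitly using Algorithm~\ref{alg:nodepoly}, obtaining the polynomials tabulated in Appendix~\ref{app:newpolys}. Then form
\[
F(d,x) \;=\; \sum_{\delta \ge 0} N_\delta(d)\, x^\delta \;\in\; \QQ[d][[x]],
\]
which has constant term $1$, so $\log F(d,x)$ is a well-defined power series with no constant term. Define $Q_\delta(d) = [x^\delta]\, \log F(d,x)$ via the standard recursion
\[
\delta \, N_\delta(d) \;=\; \sum_{k=1}^{\delta} k\, Q_k(d)\, N_{\delta-k}(d), \qquad \delta \ge 1.
\]
Solving this recursion symbolically for $1 \le \delta \le 14$ produces the polynomials $Q_\delta(d)$, and the theorem reduces to checking that these coincide with the explicit expressions in the statement.

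The nontrivial content is the assertion that each $Q_\delta(d)$ has degree exactly $2$ in $d$. A priori, since $N_\delta(d)$ has degree $2\delta$, the recursion permits $Q_\delta(d)$ to have degree up to $2\delta$, so the vanishing of the coefficients of $d^3, d^4, \ldots, d^{2\delta}$ in $Q_\delta(d)$ is a genuine numerical check that must be carried out for each $\delta \le 14$. Conceptually, this collapse in degree is the $\PP^2$-specialization of the G\"ottsche--Yau--Zaslow universality formula (conjectured in \cite{Go}, proved by Y.-J.~Tzeng~\cite{Tz10}), under which $\log F(d,x)$ decomposes as a $\QQ[[x]]$-linear combination of $L^2 = d^2$, $L \cdot K_{\PP^2} = -3d$, $K_{\PP^2}^2 = 9$, and $c_2(\PP^2) = 3$, all of which are polynomial in $d$ of degree at most $2$. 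One could invoke Tzeng's theorem to predict the quadratic property \emph{a priori}, but since our floor-diagram computation of $N_\delta(d)$ is independent of that result, the direct verification also supplies an independent numerical corroboration of G\"ottsche--Yau--Zaslow for $\PP^2$ and $\delta \le 14$.

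The main obstacle is computational rather than conceptual. Running Algorithm~\ref{alg:nodepoly} out to $\delta = 14$ requires enumerating and summing the symbolic contributions of all pertinent templates; the number of templates and the complexity of each symbolic contribution grow rapidly in $\delta$, so a careful implementation (exploiting the template decomposition of \cite{FM} and caching shared subsums) is needed to complete the run in reasonable time. Once the fourteen polynomials $N_\delta(d)$ are in hand, extracting the logarithm and matching against the stated $Q_\delta(d)$ is a short symbolic computation in any computer algebra system.
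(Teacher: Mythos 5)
Your proposal is correct and mirrors the paper's own route: the paper explicitly treats Theorem~\ref{thm:nodepolynomials} as a ``more compact'' rephrasing of Theorem~\ref{thm:nodepolys}, whose content is the output of Algorithm~\ref{alg:nodepoly}, and the passage between the $N_\delta$ and the $Q_\delta$ is the same formal $\log$--$\exp$ transform you describe. You also correctly identify that the quadraticity of the $Q_\delta$ is a genuine numerical observation rather than a structural consequence of the floor-diagram machinery (the paper notes it verifies G\"ottsche's conjecture for $\delta\le 14$), and your remark that Tzeng's theorem would predict this \emph{a priori} is accurate context but, as you say, not part of the proof itself.
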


L.~G\"ottsche \cite{Go}
conjectured that all $Q_\delta(d)$ are quadratic. This
theorem proves his conjecture for $\delta \le
14$.

The basic idea of the algorithm (see \cite[Section 5]{FM}) is to
decompose labeled floor diagrams into
smaller building blocks. These gadgets will be crucial in the proofs of
all theorems in this paper.

\begin{definition}
\label{def:template}
A \emph{template} $\Gamma$ is a directed graph (with
possibly multiple edges) on vertices
$\{0, \dots, l\}$, for $l\ge 1$, and edge
weights $w(e) \in \ZZ_{>0}$, satisfying:
\begin{enumerate}
\item If $i \to j$ is an edge then $i <j$.
\item Every edge $i \stackrel{e}{\to} i+1$ has weight $w(e) \ge
  2$. (No ``short edges.'')
\item For each vertex $j$, $1 \le j \le l-1$, there is an edge
  ``covering'' it, i.e., there exists an edge $i \to k$ with $i <j <k$.
\end{enumerate}
\end{definition}

\begin{figure}[htbp]
\begin{center}
\begin{tabular}{c|c|c|c|c|c|c|c|c}
$\Gamma$ &
$\delta(\Gamma)$ & $\ell(\Gamma)$ 
& $\mu(\Gamma)$ 
& $\varepsilon(\Gamma)$ & $\varkappa(\Gamma)$ 
& $k_{\min}(\Gamma)$ & $P(\Gamma,k)$ & $s(\Gamma)$
\\
\hline \hline
&&&&&&&&\\[-.1in]
\begin{picture}(95,8)(-10,-4)\setlength{\unitlength}{2.5pt}\thicklines
\multiput(0,0)(10,0){2}{\circle{2}}
\put(5,2){\makebox(0,0){$\scriptstyle 2$}}
\Eeee
\end{picture}
& 1 & 1 & 4 & 0 & (2) & 2 & $k-1$ & 1
\\[.15in]
\begin{picture}(95,8)(-10,-4)\setlength{\unitlength}{2.5pt}\thicklines
\ooo
\qbezier(0.8,0.6)(10,5)(19.2,0.6)
\end{picture}
& 1 & 2 & 1 & 1 & (1,1) & 1 & $2k+1$ & 0
\\
\hline \hline
&&&&&&&&\\[-.1in]
\begin{picture}(95,8)(-10,-4)\setlength{\unitlength}{2.5pt}\thicklines
\multiput(0,0)(10,0){2}{\circle{2}}
\put(5,2){\makebox(0,0){$\scriptstyle 3$}}
\Eeee
\end{picture}
& 2 & 1 & 9 & 0 & (3) & 3 & $k-2$ & 1
\\[.15in]
\begin{picture}(95,8)(-10,-4)\setlength{\unitlength}{2.5pt}\thicklines
\multiput(0,0)(10,0){2}{\circle{2}}
\put(5,3.5){\makebox(0,0){$\scriptstyle 2$}}
\put(5,-3.5){\makebox(0,0){$\scriptstyle 2$}}
\qbezier(0.8,0.6)(5,2)(9.2,0.6)
\qbezier(0.8,-0.6)(5,-2)(9.2,-0.6)
\end{picture}
& 2 & 1 & 16 & 0 & (4) & 4 & $\binom{k-2}{2}$ & 2
\\[.15in]
\begin{picture}(95,8)(-10,-4)\setlength{\unitlength}{2.5pt}\thicklines
\ooo
\qbezier(0.8,0.6)(10,4)(19.2,0.6)
\qbezier(0.8,-0.6)(10,-4)(19.2,-0.6)
\end{picture}
& 2 & 2 & 1 & 1 & (2,2) & 2 & $\binom{2k}{2}$ & 0
\\[.15in]
\begin{picture}(95,8)(-10,-4)\setlength{\unitlength}{2.5pt}\thicklines
\ooo
\qbezier(0.8,0.6)(10,4)(19.2,0.6)
\put(5,-2){\makebox(0,0){$\scriptstyle 2$}}
\Eeee
\end{picture}
& 2 & 2 & 4 & 1 & (3,1) & 3 & $2k(k-2)$ & 1
\\[.15in]
\begin{picture}(95,8)(-10,-4)\setlength{\unitlength}{2.5pt}\thicklines
\ooo
\qbezier(0.8,0.6)(10,4)(19.2,0.6)
\put(15,-2){\makebox(0,0){$\scriptstyle 2$}}
\eEee
\end{picture}
& 2 & 2 & 4 & 0 & (1,3) & 2 & $2k(k-1)$ & 1
\\[.15in]
\begin{picture}(95,8)(-10,-4)\setlength{\unitlength}{2.5pt}\thicklines
\oooo
\qbezier(0.8,0.6)(15,6)(29.2,0.6)
\end{picture}
& 2 & 3 & 1 & 1 & (1,1,1) & 1 & $3(k+1)$ & 0
\\[.15in]
\begin{picture}(95,8)(-10,-4)\setlength{\unitlength}{2.5pt}\thicklines
\oooo
\qbezier(0.8,0.6)(10,5)(19.2,0.6)
\qbezier(10.8,0.6)(20,5)(29.2,0.6)
\end{picture}
& 2 & 3 & 1 & 1 & (1,2,1) & 1 & $k(4k+5)$ & 0
\\[-.05in]
\end{tabular}
\end{center}
\caption{The templates with $\delta(\Gamma) \le 2$.}
\label{fig:templates}
\end{figure}

Every template $\Gamma$ comes with some numerical
data associated with it. Its \emph{length} $l(\Gamma)$ is the number of
vertices minus $1$. The product of squares of the edge weights
is its \emph{multiplicity} $\mu(\Gamma)$. Its \emph{cogenus} $\delta(\Gamma)$
is
\[
\delta(\Gamma) = \sum_{i \stackrel{e}{\to} j} \bigg[(j-i) w(e) -1
\bigg].
\]

For $1 \le j \le l(\Gamma)$ let $\varkappa_j = \varkappa_j(\Gamma)$
denote the sum of the weights of edges $i \to k$ with $i
< j \le k$ and define
\[
k_{\min}(\Gamma) = \max_{1 \le j \le l}(\varkappa_j -j +1).
\]
This makes $k_{\min}(\Gamma)$ the smallest positive integer $k$ such
that $\Gamma$ can appear in a floor diagram on $\{1, 2, \dots \}$ with
left-most vertex $k$.
Lastly, set
 \begin{displaymath}
   \eps(\Gamma)  = \left\{
     \begin{array}{ll}
       1 &  \text{if all edges arriving at } l \text{ have weight }1,\\
       0 &  \text{otherwise.}
     \end{array}
   \right.
\end{displaymath}
Figure~\ref{fig:templates} (Figure 10 taken from \cite{FM}) lists all templates $\Gamma$ with
$\delta(\Gamma) \le 2$.

A labeled floor diagram $\D$ with $d$ vertices decomposes into an ordered
collection $(\Gamma_1, \dots, \Gamma_m)$ of templates as
follows: First, add an additional vertex $d+1$ ($> d$) to $\D$ along with, for
every vertex $j$ of $\D$, $1 - div(j)$ new edges of
weight $1$ from $j$ to the new vertex
$d+1$. The resulting floor diagram $\D'$ has divergence $1$
at every vertex coming from $\D$. Now remove all \emph{short edges}
from $\D'$, that is, all edges of weight~$1$ between consecutive
vertices. The result is an ordered collection of templates $(\Gamma_1,
\dots, \Gamma_m)$, listed 
left to right, and it is not hard to see that $\sum \delta(\Gamma_i) = \delta(\D)$. 
This process is
reversible once we record the smallest vertex $k_i$ of each
template $\Gamma_i$ (see Example~\ref{ex:templatedecomposition}).

\begin{example}
\label{ex:templatedecomposition}
An example of the decomposition of a labeled floor diagram into
templates is illustrated below. Here, $k_1 = 2$ and $k_2 = 4$.


\begin{picture}(50,121)(-165,-103)\setlength{\unitlength}{3.0pt}\thicklines
\ooooo\Eeee\eEee\eeEe \eeOe \eeeE 
\put(15,2){\makebox(0,0){$2$}} 
\put(7,0){\vector(1,0){1}} 
\put(17,0){\vector(1,0){1}} 
\put(27,0){\vector(1,0){1}} 
\put(27.5,1.75){\vector(2,-1){1}}
\put(27.5,-1.75){\vector(2,1){1}}
\put(35,2){\makebox(0,0){$3$}} 
\put(37,0){\vector(1,0){1}} 
\put(20,-8){\LARGE$\updownarrow$}
\put(-13,-1){$\D = $}
\end{picture}
\begin{picture}(50,121)(-111,-55)\setlength{\unitlength}{3pt}\thicklines
\oooooo\Eeee\eEee\eeEe \eeOe \eeeE \eeeBB \eeeeE \eeeeO
\put(15,2){\makebox(0,0){$2$}} 
\put(7,0){\vector(1,0){1}} 
\put(17,0){\vector(1,0){1}} 
\put(27,0){\vector(1,0){1}} 
\put(27.5,1.75){\vector(2,-1){1}}
\put(27.5,-1.75){\vector(2,1){1}}
\put(35,2){\makebox(0,0){$3$}} 
\put(37,0){\vector(1,0){1}} 
\put(47,0){\vector(1,0){1}} 
\put(40,5){\vector(1,0){1}} 
\put(47.5,1.75){\vector(2,-1){1}}
\put(47.5,-1.75){\vector(2,1){1}}
\put(20,-8){\LARGE$\updownarrow$}
\put(-13,-1){$\D' = $}
\end{picture}
\begin{picture}(50,121)(-57,-10)\setlength{\unitlength}{3pt}\thicklines
\oooooo \eEee  \eeeE \eeeBB 
\put(-3,-1){\big (}
\put(1,-1){\big )}
\put(15,2){\makebox(0,0){$2$}} 
\put(17,0){\vector(1,0){1}} 
\put(35,2){\makebox(0,0){$3$}} 
\put(37,0){\vector(1,0){1}} 
\put(40,5){\vector(1,0){1}} 
\put(-22,-1){$(\Gamma_1, \Gamma_2) = $}
\end{picture}
\end{example}

To each template $\Gamma$ we associate a polynomial that records the
number of
``markings of $\Gamma$:'' For $k \in \ZZ_{>0}$ let $\Gamma_{(k)}$ denote
the graph obtained from $\Gamma$ by first adding $k+i-1-\varkappa_i$
short edges connecting $i-1$ to i, for $1 \le i \le l(\Gamma)$, and
then subdividing each edge of the resulting graph by introducing one
new vertex for each edge. By \cite[Lemma 5.6]{FM} the number of
linear extensions (up to equivalence) of
the vertex poset of the graph $\Gamma_{(k)}$ extending the vertex
order of $\Gamma$ is a polynomial in $k$, if $k \ge k_{\min}(\Gamma)$,
which we denote by $P(\Gamma,k)$ (see Figure~\ref{fig:templates}). The number of markings of a labeled
floor diagram $\D$ decomposing into templates $(\Gamma_1, \dots,
\Gamma_m)$ is then
\[
\nu(\D) = \prod_{i = 1}^m P(\Gamma_i, k_i),
\]
where $k_i$ is the smallest vertex of $\Gamma_i$ in $\D$. The algorithm is based on

\begin{theorem}[\cite{FM}, (5.13)]
\label{thm:severiformula}
The Severi degree $N^{d,\delta}$, for $d, \delta \ge 1$, is given by the
template decomposition formula
\begin{equation}
\label{eqn:5.13}
\sum_{(\Gamma_1, \dots, \Gamma_m)}
\prod_{i=1}^m \mu(\Gamma_i) \hspace{-2mm}
\sum_{k_m=k_{\min}(\Gamma_m)}^{d-l(\Gamma_m)+\eps(\Gamma_m)}
\hspace{-5mm} P(\Gamma_m, k_m)
 \cdots
\sum_{k_1=k_{\min}(\Gamma_1)}^{k_2-l(\Gamma_1)} \hspace{-3mm} P(\Gamma_1, k_1),
\end{equation}
where the first sum is over all ordered collections of templates
$(\Gamma_1, \dots, \Gamma_m)$, for all $m \ge 1$, with $\sum_{i=1}^m
\delta(\Gamma_i) = \delta$, and the sums indexed by $k_i$, for $1 \le i <
m$, are over $k_{\min}(\Gamma_i) \le k_i \le k_{i+1}-l(\Gamma_i)$,
\end{theorem}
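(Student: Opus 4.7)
The plan is to start from the floor diagram formula for the Severi degree, namely
\[
N^{d,\delta}\ =\ \sum_{\D}\mu(\D)\,\nu(\D),
\]
where $\D$ ranges over labeled floor diagrams of degree $d$ and cogenus $\delta$ (Theorem~\ref{thm:floorcount}(1)), and then reorganize this sum according to the template decomposition described just before the statement. The key point is to establish a bijection
\[
\D\ \longleftrightarrow\ \bigl((\Gamma_1,\dots,\Gamma_m);\,(k_1,\dots,k_m)\bigr),
\]
where $(\Gamma_1,\dots,\Gamma_m)$ is an ordered tuple of templates with $\sum_i\delta(\Gamma_i)=\delta$ and $k_i$ is the smallest vertex occupied by $\Gamma_i$ in $\D$. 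Once this bijection is in place, the factorizations $\mu(\D)=\prod_i\mu(\Gamma_i)$ (because short edges all have weight $1$ and so contribute trivially) and $\nu(\D)=\prod_i P(\Gamma_i,k_i)$ (already stated in the text as a consequence of \cite[Lemma~5.6]{FM}) convert the outer sum into a product of iterated sums in the $k_i$'s, which is precisely the right-hand side of \eqref{eqn:5.13}.

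The substance of the argument is therefore in pinning down the ranges of the indices $k_i$. First, I would verify that the decomposition procedure (add vertex $d+1$, fill in weight-$1$ edges to obtain $\D'$ of divergence $1$ everywhere, remove short edges) is reversible from the data $((\Gamma_i),(k_i))$, so that counting such data counts floor diagrams. Then, working out the admissible positions:
\begin{itemize}
\item The lower bound $k_i\ge k_{\min}(\Gamma_i)$ is exactly the content of the definition of $k_{\min}$: it is the smallest left-most vertex for which $\Gamma_i$ can be inserted while respecting the divergence condition at its internal vertices.
\item The non-overlap constraint $k_i+l(\Gamma_i)\le k_{i+1}$ follows from the fact that distinct templates arise after deleting short edges, so their interior vertex sets are disjoint (they may share only an endpoint, corresponding to the equality case).
\item Finally, the upper bound $k_m\le d-l(\Gamma_m)+\eps(\Gamma_m)$ is the delicate point: the last template $\Gamma_m$ is allowed to reach vertex $d+1$ exactly when every edge arriving at its rightmost vertex has weight $1$, because the only edges landing at $d+1$ in $\D'$ are the weight-$1$ edges added in the first step. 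This is precisely the condition $\eps(\Gamma_m)=1$.
\end{itemize}

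The main obstacle, and the place where care is required, is the third bullet above: one must check that the added vertex $d+1$ can legitimately play the role of the rightmost vertex of $\Gamma_m$ iff $\eps(\Gamma_m)=1$, and then confirm that in that case the contribution $P(\Gamma_m,k_m)$ still counts markings correctly (the added vertex and weight-$1$ edges to it are exactly the data parameterized by the polynomial $P(\Gamma_m,\cdot)$). Once the range conditions are justified, assembling the pieces is purely bookkeeping: split the product $\mu(\D)\nu(\D)=\prod_i\mu(\Gamma_i)P(\Gamma_i,k_i)$, sum first over $k_1$ with $k_2$ fixed, then $k_2$ with $k_3$ fixed, and so on, obtaining the nested sums of \eqref{eqn:5.13}. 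The outer sum over template tuples with $\sum\delta(\Gamma_i)=\delta$ then accounts for all labeled floor diagrams of cogenus $\delta$, completing the proof.
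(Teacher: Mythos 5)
The paper states this formula with a citation to Fomin--Mikhalkin \cite{FM} and does not reprove it, so there is no in-paper proof to compare against; your plan is a correct reconstruction of the argument behind \cite[(5.13)]{FM}, following precisely the template-decomposition setup that the paper recalls just before the theorem (bijection $\D\leftrightarrow(\Gamma_i;k_i)$, the factorizations $\mu(\D)=\prod\mu(\Gamma_i)$ and $\nu(\D)=\prod P(\Gamma_i,k_i)$, and the index ranges). You also correctly single out the $\eps(\Gamma_m)$-dependent upper bound on $k_m$ as the only delicate point: the ``only if'' direction is immediate since every edge landing at $d+1$ in $\D'$ has weight $1$, while the ``if'' direction, together with the confirmation that $P(\Gamma_m,\cdot)$ continues to count markings when $\Gamma_m$'s last vertex is $d+1$, is exactly the verification that must be carried out, as you flag.
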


Expression (\ref{eqn:5.13}) can be evaluated symbolically, using the
following two lemmata. The first is Faulhaber's formula \cite{Kn93} from
1631 for discrete
integration of polynomials. The second treats lower limits of iterated discrete
integrals and its proof is straightforward. Here $B_j$ denotes the
$j$th Bernoulli number with the convention that $B_1=+\frac{1}{2}$.
 
\begin{lemma}[\cite{Kn93}]
\label{lem:faulhaber}
Let $f(k) = \sum_{i=0}^d c_i k^i$ be a polynomial in $k$. Then, for $n \ge 0$,
\begin{equation}
\label{eqn:discreteint}
F(n) \stackrel{\text{def}}{=} \sum_{k=0}^{n} f(k) = \sum_{s=0}^d 
\frac{c_s}{s+1} \sum_{j=0}^s{{s+1} \choose{j}} B_j n^{s+1-j}.
\end{equation}
In particular, $deg(F) = deg(f) + 1 $. 
\end{lemma}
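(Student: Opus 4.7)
The plan is to reduce to the monomial case by linearity and then invoke the telescoping property of Bernoulli polynomials. By linearity of $F$ in $f$, the claim reduces to proving the monomial identity
\[
\sum_{k=0}^n k^s = \frac{1}{s+1} \sum_{j=0}^s \binom{s+1}{j} B_j \, n^{s+1-j}
\]
for each $0 \le s \le d$; the polynomial identity for $F$ then follows by multiplying by $c_s$ and summing over $s$.

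For this monomial identity, I would introduce the Bernoulli polynomials $B_r(x) := \sum_{j=0}^r \binom{r}{j} B_j x^{r-j}$ and invoke the classical difference identity
\[
B_r(x+1) - B_r(x) = r\, x^{r-1},
\]
which follows from the Bernoulli generating function $\tfrac{t e^{xt}}{e^t - 1} = \sum_{r \ge 0} B_r(x)\, \tfrac{t^r}{r!}$ by reading off the coefficient of $t^r/r!$ in $\tfrac{t e^{(x+1)t}}{e^t-1} - \tfrac{t e^{xt}}{e^t-1} = t e^{xt}$. Telescoping the special case $B_{s+1}(k+1) - B_{s+1}(k) = (s+1) k^s$ over $k = 0, 1, \dots, n$ yields
\[
(s+1) \sum_{k=0}^n k^s \;=\; B_{s+1}(n+1) - B_{s+1}(0).
\]
Under the $B_1 = +1/2$ convention, a short calculation expands $B_{s+1}(n+1)$ as $\sum_{j=0}^{s+1} \binom{s+1}{j} B_j \, n^{s+1-j}$, and for $s \ge 1$ the $j = s+1$ term exactly cancels $B_{s+1}(0)$, producing the required formula after division by $s+1$.

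The degree statement is then immediate from the closed form: the only contribution of degree $d+1$ in $n$ on the right-hand side arises from $s = d$, $j = 0$, and equals $\tfrac{c_d}{d+1}\, n^{d+1}$, which is nonzero by the hypothesis $c_d \neq 0$. The main (and essentially only) subtlety is the careful bookkeeping of the $B_1 = +1/2$ sign convention: this is precisely what makes the Bernoulli expansion of $B_{s+1}(n+1)$ land on powers of $n$ (rather than $n+1$) and ensures the clean cancellation of the $j = s+1$ boundary term. Once that is settled, the remainder is a routine telescoping argument and requires no further computation.
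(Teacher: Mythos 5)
The paper does not prove this lemma; it simply cites Knuth's 1993 article on Faulhaber, so there is no in-paper argument to compare against. Your proof — telescoping the difference identity $B_{s+1}(x+1) - B_{s+1}(x) = (s+1)x^s$ and then expanding $B_{s+1}(n+1)$ in powers of $n$ under the $B_1 = +\tfrac12$ convention — is the classical route and is essentially correct. One loose thread: the cancellation you invoke between the $j = s+1$ term and $B_{s+1}(0)$ holds only for $s \ge 1$, as you yourself flag, but you never return to settle $s = 0$. At $s = 0$ those two quantities do not cancel but instead add to give $+1$, and this is not a defect of your argument: it reveals that the displayed identity, as written with lower limit $k=0$ and $B_1 = +\tfrac12$, is off by the constant term $c_0$ (the left side for $f \equiv 1$ is $n+1$ while the right side is $n$). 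The correct classical statement under this sign convention has $\sum_{k=1}^n k^s$ on the left, which agrees with $\sum_{k=0}^n k^s$ for all $s \ge 1$ but not for $s = 0$. So your proof establishes the corrected identity; the $s=0$ discrepancy looks to be a harmless typo in the paper, since the lemma is applied to iterated discrete integration with variable lower limits where only differences of $F$ enter and the constant drops out. The degree statement follows from the closed form exactly as you say.
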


\begin{lemma}
\label{lem:lowerlimits}
Let $f(k_1)$ and $g(k_2)$ be polynomials in $k_1$ and $k_2$,
respectively, and let $a_1, b_1, a_2,
b_2 \in \ZZ_{\ge 0}$. Furthermore, let $F(k_2) = \sum_{k_1 =
  a_1}^{k_2 - b_1} f(k_1)$ be a discrete anti-derivative of $f(k_1)$, where $k_2 \ge a_1 + b_1$. Then, for $ n \ge \max(a_1 + b_1 + b_2, a_2 + b_2)$,
\begin{displaymath}
\sum_{k_2 = a_2}^{n-b_2} g(k_2) \sum_{k_1 = a_1}^{k_2 - b_1} f(k_1)
= \sum_{k_2 = \max(a_1 + b_1, a_2)}^{n-b_2} \hspace{-6mm} g(k_2) F(k_2).
\end{displaymath}
\end{lemma}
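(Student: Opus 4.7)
The proof is essentially an observation about empty inner sums, so my plan is very short.

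The plan is to split into cases depending on the relative sizes of $a_2$ and $a_1+b_1$. If $a_2 \ge a_1+b_1$, then $\max(a_1+b_1, a_2) = a_2$ and the identity is tautological. So the content is entirely in the case $a_2 < a_1+b_1$. Here I would observe that for any integer $k_2$ in the range $a_2 \le k_2 < a_1 + b_1$, the upper limit $k_2 - b_1$ of the inner sum $\sum_{k_1=a_1}^{k_2-b_1} f(k_1)$ is strictly less than $a_1$, so this sum is empty and equals $0$. Therefore the corresponding terms on the left-hand side contribute nothing, and we may raise the lower limit of the outer summation from $a_2$ up to $a_1+b_1 = \max(a_1+b_1, a_2)$ without changing the total. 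Substituting $F(k_2)$ for the inner sum (which is now nonempty because $k_2 \ge a_1+b_1$, matching the hypothesis under which $F(k_2)$ was defined) yields exactly the right-hand side.

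The condition $n \ge \max(a_1+b_1+b_2, a_2+b_2)$ simply guarantees that the outer sum on the right-hand side is over a nonempty (or at worst empty-in-a-consistent-way) index range $\max(a_1+b_1,a_2) \le k_2 \le n-b_2$, so no issues arise with the bounds being inverted.

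There is no hard step; the only thing to be careful about is the convention that a sum with upper limit less than lower limit is $0$, which is standard and already implicit in the statement of Lemma~\ref{lem:faulhaber}. No appeal to polynomiality of $f$ or $g$ is actually needed for this identity; the hypothesis that they are polynomials is inherited from the intended application in Theorem~\ref{thm:severiformula}, where one iterates this lemma together with Faulhaber's formula to evaluate (\ref{eqn:5.13}) symbolically.
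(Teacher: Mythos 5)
Your proof is correct, and it matches what the paper intends: the paper explicitly declines to write a proof, stating only that ``its proof is straightforward,'' and your observation that the inner sum vanishes for $a_2 \le k_2 < a_1+b_1$ (so the lower limit of the outer sum may be raised to $\max(a_1+b_1,a_2)$) is exactly the intended argument.
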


\begin{example}
\label{ex:lowerlimits}
An illustration of  Lemma~\ref{lem:lowerlimits} is the following iterated discrete integral:
\begin{displaymath}
\sum_{k_2 = 1}^n \sum_{k_1 = 1}^{k_2-2} 1 = \sum_{k_2 = 1}^n
\left\{
     \begin{array}{ll}
       k_2 -2 &  \text{if } k_2 \ge 2\\
       0 &   \text{if } k_2 = 1
     \end{array}
   \right\}
= \sum_{k_2 = 3}^n \big( k_2-2 \big).
\end{displaymath}
\end{example}

\begin{algorithm}[tb]
\label{alg:nodepoly}
\SetAlgoLined 
\KwData{The cogenus $\delta$.}
\KwResult{The node polynomial $N_\delta(d)$.}
\Begin{
Generate all templates $\Gamma$ with $\delta(\Gamma) \le \delta$\;
$N_\delta(d) \leftarrow 0$\;
\ForAll{ordered collections of templates $\tilde{\Gamma} =(\Gamma_1, \dots,
\Gamma_m)$ with $\sum_{i=1}^m \delta(\Gamma_i) = \delta$}{
$i \leftarrow 1$\; $Q_1 \leftarrow 1$\;
\While{$i \le m$}{
$a_i \leftarrow \max \big( k_{\min}(\Gamma_i), k_{\min}(\Gamma_{i-1}) +
l(\Gamma_{i-1}), \dots, k_{\min}(\Gamma_1) + l(\Gamma_1) + \cdots +
l(\Gamma_{i-1}) \big)$;
}
\While{$i \le m-1$}{
$Q_{i+1}(k_{i+1}) \leftarrow \sum_{k_i = a_i}^{k_{i+1}-l(\Gamma_i)}
P(\Gamma_i, k_i) Q_i(k_i)$\; 
$i \leftarrow i+1$;
}
$Q^{\tilde{\Gamma}}(d) \leftarrow \sum_{k_m = a_m}^{d-l(\Gamma_m)+\eps(\Gamma_m)}
P(\Gamma_m, k_m) Q_m(k_m)$\;
$Q^{\tilde{\Gamma}}(d) \leftarrow \prod_{i=1}^m \mu(\Gamma_i) \cdot
Q^{\tilde{\Gamma}}(d)$\;
$N_\delta(d) \leftarrow N_\delta(d) + Q^{\tilde{\Gamma}}(d)$\;
}
}
\caption{Algorithm to compute node polynomials.}
\end{algorithm}

Using these results Algorithm \ref{alg:nodepoly} computes node polynomials $N_\delta(d)$
for an arbitrary number of nodes $\delta$. The first step, the
template generation, is explained later in this section.

\begin{proof}[Proof of Correctness of Algorithm~\ref{alg:nodepoly}.]
The algorithm is a direct implementation of Theorem~\ref{thm:severiformula}. The \mbox{$m$-fold} discrete integral is evaluated symbolically,
one sum at a time, using Faulhaber's formula (Lemma~\ref{lem:faulhaber}). The lower limit $a_i$ of the $i$th sum is
given by an iterated application of Lemma~\ref{lem:lowerlimits}.
\end{proof}

As Algorithm~\ref{alg:nodepoly} is stated its termination in
reasonable time is hopeless for $\delta \ge 8$ or $9$. The novelty of this section, together with
an explicit formulation, is how to implement
the algorithm efficiently. This is explained in Remark~\ref{rmk:improvements}.

\begin{remark}
\label{rmk:improvements}
The running time of the algorithm can be
improved vastly as follows: As the limits of summation in
(\ref{eqn:5.13}) only depend on $k_{\min}(\Gamma_i)$,
$l(\Gamma_i)$ and $\eps(\Gamma_m)$, we can replace the template
polynomials $P(\Gamma_i,k_i)$ by $\sum P(\Gamma_i, k_i)$, where
the sum is over all templates $\Gamma_i$ with prescribed
$(k_{\min}, l, \eps)$. After this transformation the first sum in 
(\ref{eqn:5.13}) is over all combinations of those tuples. This
reduces the computation drastically as, for example, the $167885753$
templates of cogenus $14$ make up only $343$ equivalence
classes. Also, in  (\ref{eqn:5.13}) we can distribute the template multiplicities
$\mu(\Gamma_i)$ and replace $P(\Gamma_i, k_i)$ by $\mu(\Gamma_i)
P(\Gamma_i, k_i)$ and thereby eliminate $\prod \mu(\Gamma_i)$. Another
speed-up is to compute all discrete integrals of monomials using Lemma~\ref{lem:faulhaber} in advance.
\end{remark}

The generation of the templates is the bottleneck of the algorithm. Their number grows rapidly with $\delta$ as can be
seen from Figure~\ref{fig:templatetable}. However, their generation can be
parallelized easily (see below).

Algorithm~\ref{alg:nodepoly} has been implemented in Maple. Computing $N_{14}(d)$ on a machine with two quad-core Intel(R) Xeon(R)
CPU L5420 @ 2.50GHz, 6144 KB cache, and 24 GB RAM took about 70 days.

\begin{remark}
\label{rmk:relativeremark}
Using the combinatorial framework of
floor diagrams one
can show that also \emph{relative Severi degrees} (i.e., the degrees
of \emph{generalized Severi varieties}, see~\cite{CH98,
  Va00}) are polynomial and given by ``relative node polynomials''~\cite[Theorem~1.1]{FB10}. This suggests the existence of a generalization
of G\"ottsche's Conjecture~\cite[Conjecture~2.1]{Go} and the G\"ottsche-Yau-Zaslow
formula~\cite[Conjecture~2.1]{Go}. Thus, the combinatorics of floor
diagrams lead to new conjectures although the techniques and results seem to be
out of reach at this time. 
\end{remark}

\begin{remark}
\label{rmk:numericalalgo}
We can use Algorithm~\ref{alg:nodepoly} to compute the
values of the Severi degrees $N^{d, \delta}$ for prescribed values of 
$d$ and $\delta$. After we specify a degree $d$ and a number of nodes
$\delta$ all sums in our algorithm become finite and can be evaluated
numerically. See Appendix \ref{app:smallSevdegs} for all values of $N^{d,
\delta}$ for $0 \le \delta \le 14$ and $1 \le d \le 13$.
\end{remark}

\begin{proof}[Proof of Proposition~\ref{prop:Gthreshold}]
For $1 \le \delta \le 14$ we observe, using the data in Appendices
\ref{app:newpolys} and \ref{app:smallSevdegs}, that $N_{\delta}(d) =
N^{d, \delta}$ for all $d_0(\delta) \le d < \delta$, where
$d_0(\delta) = \left \lceil\frac{\delta}{2} \right \rceil + 1$ is
G\"ottsche's threshold. Furthermore, $N_{\delta}(d_0(\delta) - 1) \neq
N^{d_0(\delta) - 1, \delta}$ for all $3 \le \delta \le 14$.
\end{proof}

\subsection*{Template Generation}

\begin{figure}[bp]
\label{fig:templatesliding}
\begin{picture}(-55,220)(180,-220) \setlength{\unitlength}{0.75pt}
\xymatrix{
&&
\begin{picture}(55,30)(0,-4)\setlength{\unitlength}{1.7pt}\thicklines
\ooo\Eeee\EEee\BBee
\put(5,-3){\makebox(0,0){$2$}} 
\put(7,0){\vector(1,0){1}} 
\put(10,5){\vector(1,0){1}} 
\put(10,10){\vector(1,0){1}} 
\put(-17,-1){$\Gamma_0 = $}
\end{picture}
 \ar[dl] \ar[dr]
&& \\
&
\begin{picture}(55,30)(10,-4)\setlength{\unitlength}{1.7pt}\thicklines
\oooo\Eeee\EEee\eBBe
\put(5,-3){\makebox(0,0){$2$}} 
\put(7,0){\vector(1,0){1}} 
\put(10,5){\vector(1,0){1}} 
\put(20,10){\vector(1,0){1}} 
\end{picture}
 \ar[dl] \ar[d]
& &
\begin{picture}(55,30)(10,-4)\setlength{\unitlength}{1.7pt}\thicklines
\ooo\eEee\EEee\BBee
\put(15,-3){\makebox(0,0){$2$}} 
\put(17,0){\vector(1,0){1}} 
\put(10,5){\vector(1,0){1}} 
\put(10,10){\vector(1,0){1}} 
\end{picture}
 \ar[dl] \ar[d]
&
\\
\begin{picture}(55,30)(30,-4)\setlength{\unitlength}{1.7pt}\thicklines
\ooooo\Eeee\EEee\eeBB
\put(5,-3){\makebox(0,0){$2$}} 
\put(7,0){\vector(1,0){1}} 
\put(10,5){\vector(1,0){1}} 
\put(30,10){\vector(1,0){1}} 
\put(0, -3){\line(3,1){40}}
\put(0, 10){\line(3,-1){40}}
\end{picture}
&
\begin{picture}(55,30)(20,-4)\setlength{\unitlength}{1.7pt}\thicklines
\oooo\Eeee\eEEe\eBBe
\put(5,-3){\makebox(0,0){$2$}} 
\put(7,0){\vector(1,0){1}} 
\put(20,5){\vector(1,0){1}} 
\put(20,10){\vector(1,0){1}} 
\put(0, -3){\line(3,1){40}}
\put(0, 10){\line(3,-1){40}}
\end{picture}
 & 
\begin{picture}(55,30)(0,-4)\setlength{\unitlength}{1.7pt}\thicklines
\oooo\eEee\EEee\eBBe
\put(15,-3){\makebox(0,0){$2$}} 
\put(17,0){\vector(1,0){1}} 
\put(10,5){\vector(1,0){1}} 
\put(20,10){\vector(1,0){1}} 
\end{picture}
 \ar[dl] & 
\begin{picture}(0,0)(11,-4)
\Bigg(
\end{picture}
 \begin{picture}(55,30)(0,-4)\setlength{\unitlength}{1.7pt}\thicklines
\oooo\eeEe\EEee\BBee
\put(25,-3){\makebox(0,0){$2$}} 
\put(27,0){\vector(1,0){1}} 
\put(10,5){\vector(1,0){1}} 
\put(10,10){\vector(1,0){1}} 
\end{picture} 
\begin{picture}(0,0)(-11,-5)
\Bigg)
\end{picture}
 \ar[dl] \ar[d]
\\
&
\begin{picture}(55,30)(30,-4)\setlength{\unitlength}{1.7pt}\thicklines
\ooooo\eEee\EEee\eeBB
\put(15,-3){\makebox(0,0){$2$}} 
\put(17,0){\vector(1,0){1}} 
\put(10,5){\vector(1,0){1}} 
\put(30,10){\vector(1,0){1}} 
\put(0, -3){\line(3,1){40}}
\put(0, 10){\line(3,-1){40}}
\end{picture}
 & 
\begin{picture}(55,30)(10,-4)\setlength{\unitlength}{1.7pt}\thicklines
\oooo\eeEe\EEee\eBBe
\put(25,-3){\makebox(0,0){$2$}} 
\put(27,0){\vector(1,0){1}} 
\put(10,5){\vector(1,0){1}} 
\put(20,10){\vector(1,0){1}} 
\end{picture}
 \ar[d]   
&
\begin{picture}(55,30)(10,-4)\setlength{\unitlength}{1.7pt}\thicklines
\oooo\eeeE\EEee\BBee
\put(35,-3){\makebox(0,0){$2$}} 
\put(37,0){\vector(1,0){1}} 
\put(10,5){\vector(1,0){1}} 
\put(10,10){\vector(1,0){1}} 
\put(0, -3){\line(3,1){40}}
\put(0, 10){\line(3,-1){40}}
\end{picture}
\\ 
& & 
\begin{picture}(55,30)(30,-4)\setlength{\unitlength}{1.7pt}\thicklines
\ooooo\eeEe\EEee\eeBB
\put(25,-3){\makebox(0,0){$2$}} 
\put(27,0){\vector(1,0){1}} 
\put(10,5){\vector(1,0){1}} 
\put(30,10){\vector(1,0){1}} 
\put(0, -3){\line(3,1){40}}
\put(0, 10){\line(3,-1){40}}
\end{picture}
& \\
} 
\end{picture}
\caption[abc]{Branch-and-bound tree for 
$\alpha =
\begin{bmatrix}
0 & 1  \\
2 & 0 
\end{bmatrix}$.
}
\end{figure}

To compute a list of all templates of a given cogenus one can proceed as follows. First,
we need some terminology and notation. An edge $i \to j$
of a template is said to have \emph{length} $j-i$.
A template $\Gamma$ is of \emph{type} $\alpha =(\alpha_{ij})$, $i, j \in \ZZ_{>0}$, if
$\Gamma$ has $\alpha_{ij}$ edges of length $i$ and weight $j$. Every
type $\alpha$ satisfies, by definition of cogenus of a template,
\begin{equation}
\label{eqn:type}
\sum_{i, j \ge 1} \alpha_{ij} (i\cdot j -1) = \delta(\Gamma).
\end{equation}
Note that $\alpha_{11} = 0$ as short edges are not allowed in
templates. The number of types constituting a given cogenus $\delta$ is finite.

\begin{algorithm}[tb]
\label{alg:templategeneration}
\SetAlgoLined
\KwData{A graph $A$ with a distinguished edge $e_1$.}
\KwResult{An infinite directed tree of graphs with root $A$.}
\Begin{
\ForAll{edges $e_2$ of $A$ with $e_2 \ge e_1$ (in the fixed order)}{
$B \leftarrow$ graph obtained from $A$ by moving $e_2$ to the next
vertex\;
\If{the natural partial order (from left to right) of the edges of $B$
  that are of the
  same type as $e_2$ is compatible with the fixed order}{
Insert $B$ as a child of $A$\;

Execute this procedure with input $(B, e_2)$\;
}
}
}
\caption{A recursion which can generates a tree containing all
  templates of a given type.}
\end{algorithm}

We can generate all templates of type $\alpha$ using a branch-and-bound algorithm which slides edges in a suitable order. Let $\Gamma_0$ be the unique template
of type $\alpha$ with all edges emerging from vertex $0$.
Fix a linear order on the set of edges
of type $\alpha$. For example, if $\alpha~ =~\begin{bmatrix}
0 & 1  \\
2 & 0  \end{bmatrix}$, we could choose: 
\begin{equation*}
\begin{picture}(75,25)(-20,-10)\setlength{\unitlength}{2.5pt}\thicklines
\oo\Eeee
\put(5,-3){\makebox(0,0){$2$}} 
\put(7,0){\vector(1,0){1}} 
\end{picture}
\begin{picture}(100,25)(-20,-10)\setlength{\unitlength}{2.5pt}\thicklines
\ooo\EEee
\put(10,5){\vector(1,0){1}} 
\put(-12,-1){\LARGE$< $}
\end{picture}
\begin{picture}(100,25)(-20,-10)\setlength{\unitlength}{2.5pt}\thicklines
\ooo \BBee
\put(10,10){\vector(1,0){1}} 
\put(-12,-1){\LARGE$< $}
\put(22,-1){.}
\end{picture}
\end{equation*}

Algorithm
\ref{alg:templategeneration} applied to the pair $(\Gamma_0, e_0)$, where $e_0$
is the smallest edge of
$\Gamma_0$, creates an infinite directed tree with root
$\Gamma_0$ all of whose vertices
correspond to different graphs. Eliminate a branch if either
\begin{enumerate}
\item no edge of the root of the branch starts at vertex $1$, or
\item condition $(3)$ in Definition~\ref{def:template} is impossible to
satisfy for graphs further down the tree.
\end{enumerate}
See  Figure~\ref{fig:templatesliding} for an illustration
for $\alpha~ =~\begin{bmatrix}
0 & 1  \\
2 & 0  \end{bmatrix}$. 

A complete, non-redundant list of all templates of type $\alpha$ is
then given by all remaining graphs which satisfy condition (3) of
Definition~\ref{def:template} as every template can be obtained in a
unique way from $\Gamma_0$ by shifting edges in an order that is
compatible with the order fixed earlier.
Note that it can happen that a non-template graph
precedes a template within a branch. For an example see the graph in
brackets in Figure~\ref{fig:templatesliding}. Template generation for
different types can be executed in parallel. The number of templates, for $\delta \le 14$,
is given in Figure~\ref{fig:templatetable}.

\begin{figure}[htb]
\begin{equation*}
\begin{array}{r|r||r|r||r|r}
\delta & \# \text{ of templates} & \delta & \# \text{ of templates} &\delta & \# \text{ of templates} \\
\hline
1 & 2 & 6 & 1711 & 11 & 2233572 \\
2 & 7 & 7 & 7135 & 12 & 9423100\\
3 & 26 &  8 &29913 & 13 & 39769731\\
4 & 102 &  9 & 125775 & 14 & 167885753\\
5 & 414 &  10 & 529755 &&\\
\end{array}
\end{equation*}
\caption{The number of templates with cogenus $\delta \le 14$.}
\label{fig:templatetable}
\end{figure}

\section{Threshold Values}
\label{sec:thresholdvalues}

S.~Fomin and G.~Mikhalkin \cite[Theorem 5.1]{FM} proved polynomiality
of the
Severi degrees $N^{d,\delta}$ in $d$, for fixed $\delta$, provided $d$ is
sufficiently large. More
precisely, they showed that $N_\delta(d) = N^{d, \delta}$ for $d \ge
2\delta$. In this section we show that
their threshold can be improved to $d \ge \delta$ (Theorem~\ref{thm:threshold}).

We need the following elementary observation about robustness of discrete
anti-derivatives of polynomials whose continuous counterpart is the
well known fact that $\int_{a-1}^{a-s- 1} f(x) dx = 0$ if $f(x) = 0$
on the interval $(a-s-1, a-1)$.

\begin{lemma}
\label{lem:discretelem}
For a polynomial $f(k)$ and $a \in \ZZ_{> 0}$ let $F(n) =
\sum_{k=a}^n f(k)$ be the polynomial in $n$ uniquely determined
by large enough values of $n$. ($F(n)$ is a polynomial by
Lemma~\ref{lem:faulhaber}.) If we have $f(a-1) =
\cdots = f(a-s) = 0$ for some $0 \le s < a$ (this
condition is vacuous for $s = 0$) then it also holds that
$F(a-1) = \cdots = F(a - s - 1) = 0$.
In particular,
$\sum_{k=a}^n f(k)$ is a polynomial in $n$, for $n \ge a- s-1$.
\end{lemma}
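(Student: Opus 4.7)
The plan is to exploit a single polynomial recurrence and then induct downward from $a-1$ through $a-s-1$.

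First I would observe that the polynomials $F(n)$ and $F(n-1)+f(n)$ agree for every sufficiently large integer $n$, since in that range they both compute the sum $\sum_{k=a}^{n} f(k)$. Two polynomials agreeing on infinitely many integers coincide, so
\begin{equation*}
F(n) - F(n-1) = f(n)
\end{equation*}
holds as an identity of polynomials in $n$, hence at every integer value of~$n$. This recurrence is the only tool I will use; the rest of the argument is just iterating it.

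Next I would establish the base case $F(a-1)=0$. Setting $n=a$ in the definition gives $F(a)=f(a)$ directly, and then the recurrence with $n=a$ yields $F(a-1)=F(a)-f(a)=0$. This already proves the statement in the $s=0$ case. For $s\ge 1$, I proceed by downward induction: assuming $F(a-i)=0$ for some $1\le i\le s$, the recurrence with $n=a-i$ gives
\begin{equation*}
F(a-i-1) \;=\; F(a-i) - f(a-i) \;=\; 0 - 0 \;=\; 0,
\end{equation*}
using the vanishing of $f(a-i)$ supplied by the hypothesis. Iterating until $i=s$ produces $F(a-1)=F(a-2)=\cdots=F(a-s-1)=0$, as required.

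For the final clause, I would simply reconcile the polynomial $F(n)$ with the sum under the standard convention that an empty sum is zero. For $n\ge a$ the equality $F(n)=\sum_{k=a}^{n} f(k)$ is the defining property of~$F$. For $a-s-1\le n\le a-1$ the sum $\sum_{k=a}^{n} f(k)$ is empty and thus equals~$0$, while $F(n)=0$ by the vanishing just established. Hence $F(n)$ interpolates the sum throughout the range $n\ge a-s-1$, which is precisely the polynomiality assertion. I do not anticipate a genuine obstacle here; the only thing to be careful about is the bookkeeping of indices in the downward induction and the harmless convention for the empty sum.
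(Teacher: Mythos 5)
Your recurrence-and-induction argument is mathematically equivalent to the paper's proof, just presented in telescoped form: the paper introduces $G(n)=\sum_{k=0}^n f(k)$ and writes $F(n)=G(n)-G(a-1)$, which gives $F(a-1)=0$ for free, and then notes $G(a-1)-G(a-i-1)=\sum_{k=a-i}^{a-1}f(k)=0$ for $0\le i\le s$ by the hypothesis on $f$. Your version reproduces exactly this telescoping via the polynomial identity $F(n)-F(n-1)=f(n)$, so the engines of the two proofs are the same.

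There is one small unjustified step in your base case: you write ``setting $n=a$ in the definition gives $F(a)=f(a)$ directly.'' But $F$ is defined only as the polynomial that agrees with $\sum_{k=a}^n f(k)$ for \emph{large} $n$; agreement at the boundary value $n=a$ is not immediate from that definition, and is in fact the sort of robustness-at-small-arguments assertion the lemma itself is about. You can close the gap in either of two ways that fit your framework. First, note that Faulhaber's formula (Lemma~\ref{lem:faulhaber}) is an exact identity for all $n\ge 0$, so the auxiliary polynomial $G(n)=\sum_{k=0}^n f(k)$ agrees with the sum for every $n\ge 0$; hence $F(n)=G(n)-G(a-1)$ equals $\sum_{k=a}^n f(k)$ for every $n\ge a$, giving $F(a)=f(a)$. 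Second, and more in the spirit of your proof, use the same recurrence you already established: since $F(N)=\sum_{k=a}^N f(k)$ for some large $N$, downward induction via $F(n-1)=F(n)-f(n)$ yields $F(n)=\sum_{k=a}^n f(k)$ for all $n\ge a$, and in particular $F(a)=f(a)$. Either remark makes your base case airtight; the rest of the argument, including the treatment of the empty sum in the final clause, is correct.
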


Even for $s = 0$ the lemma is non-trivial as, in general, $F(a-2) \neq
0$.

\begin{proof}
Let $G(n)$ be the polynomial in $n$ defined via $G(n)= \sum_{k=0}^n f(k)$ for
large $n$. Then $F(n) = G(n) - \sum_{k=0}^{a-1}f(k)$ for all
$n \in \ZZ_{\ge 0}$. In particular, for any $0 \le i \le s$, we have
$F(a-i -1) = G(a-i-1) -  \sum_{k=0}^{a-1} f(k)= G(a-i-1) -
\sum_{k=0}^{a-i-1} f(k) = 0$.
\end{proof}

Recall that for a template $\Gamma$, we defined $k_{\min} = k_{\min}(\Gamma)$ to
be the smallest $k \ge 1$ such that $k + j -1 \ge \varkappa_j(\Gamma)$
for all $1 \le j \le l(\Gamma)$. Let $j_0$ be the smallest $j$ for
which equality is attained (it is easy to see that equality is
attained for some $j$). Define $s(\Gamma)$ to be the number of edges
of $\Gamma$ from $j_0-1$ to $j_0$ (of any weight). See Figure~\ref{fig:templates} for
some examples. The following lemma shows that the template polynomials
$P(\Gamma, k)$ satisfy the condition of Lemma~\ref{lem:discretelem}.

\begin{lemma}
\label{lem:templatepolyevaluations}
With the notation from above it holds that
\[
P(\Gamma, k_{\min} - 1) = P(\Gamma, k_{\min} - 2) = \cdots = P(\Gamma, k_{\min} - s(\Gamma)) = 0.
\]
\end{lemma}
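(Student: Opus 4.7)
The plan is to extract, from the combinatorial interpretation of $P(\Gamma,k)$, a common factor divisible by $(k-k_{\min}+1)(k-k_{\min}+2)\cdots(k-k_{\min}+s(\Gamma))$, which vanishes precisely at the desired $s(\Gamma)$ values. Since $P(\Gamma,k)$ is a polynomial (by \cite[Lemma 5.6]{FM}), establishing this divisibility over the integers $k\ge k_{\min}$ where $P(\Gamma,k)$ has a combinatorial interpretation suffices.

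First, I would unpack the count $P(\Gamma,k)$ as a sum over configurations. Each linear extension of the poset of $\Gamma_{(k)}$ modulo symmetries corresponds to (i) an assignment of each non-short midpoint $m_e$ of an edge $e\colon i\to k'$ to some slot $(j-1,j)$ with $i<j\le k'$, together with (ii) an ordering of the midpoints inside each slot, taken up to the permutations that swap equivalent edges (same endpoints and same weight). Short-edge midpoints are forced into their unique slot; the $s(\Gamma)$ template edges from $j_0-1$ to $j_0$ also have only one admissible slot, namely $j_0$, by minimality of $j_0$ and the definition of $s(\Gamma)$.

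The core of the argument is to isolate the dependence on $k$ of the slot-$j_0$ factor. For any fixed assignment of longer non-short edges to slots, let $t$ be the number of such midpoints placed in slot $j_0$. The midpoints gathered in slot $j_0$ then consist of $k+j_0-1-\varkappa_{j_0}=k-k_{\min}$ short-edge midpoints (one equivalence class), the $s(\Gamma)$ midpoints from the edges $(j_0-1)\to j_0$ (grouped into classes $n_w$ by weight, with $\sum_w n_w=s(\Gamma)$), and $t$ longer-edge midpoints (grouped in their own classes). Setting $N=k-k_{\min}+s(\Gamma)+t$, the number of orderings in slot $j_0$ modulo symmetries equals the multinomial
\[
\frac{N!}{(k-k_{\min})!\ \prod_w n_w!\ \prod_{c}\mu_c!},
\]
where $\mu_c$ are the class sizes among the $t$ longer midpoints. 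The factor $N!/(k-k_{\min})!=(k-k_{\min}+1)(k-k_{\min}+2)\cdots(k-k_{\min}+s(\Gamma)+t)$ contains the block
\[
(k-k_{\min}+1)(k-k_{\min}+2)\cdots(k-k_{\min}+s(\Gamma))
\]
as a polynomial factor, independent of $t$ and of the choices made in other slots.

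Because every term in the template-decomposition sum that defines $P(\Gamma,k)$ carries this common polynomial factor (all other slot factors depend on $k$ only through terms that remain polynomial and do not cancel it), the entire polynomial $P(\Gamma,k)$ is divisible by it, and hence vanishes at $k=k_{\min}-1,\ldots,k_{\min}-s(\Gamma)$. The main obstacle is the bookkeeping in identifying the slot-$j_0$ factor as stated: one has to verify that the combinatorial count genuinely decomposes into an outer sum (over slot assignments of longer edges) and an inner product of multinomials (one per slot), and that the symmetry quotients distribute across slots cleanly. Once that product decomposition is justified, pulling out the common factor in the slot-$j_0$ piece is immediate and independent of all other choices, giving the result.
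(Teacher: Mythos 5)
Your proposal targets the same divisibility statement $(k-k_{\min}+1)\cdots(k-k_{\min}+s(\Gamma)) \mid P(\Gamma,k)$ as the paper, and identifies the same source of that factor, namely the orderings of the midpoints sitting between $j_0-1$ and $j_0$ modulo the symmetry of the $k-k_{\min}$ short-edge midpoints there. Where you differ is in the decomposition of $P(\Gamma,k)$. The paper factors the full count of linear extensions of $\Gamma_{(k)}$ once and for all: first order just the $k-k_{\min}$ short midpoints and the $s(\Gamma)$ template-edge midpoints in slot $j_0$ (any such ordering is completable, and the number of completions is the same for each), then divide by the $(k-k_{\min})!$ short-edge symmetry; this produces $(k-k_{\min}+1)\cdots(k-k_{\min}+s(\Gamma))$ as a global prefactor of $P(\Gamma,k)$ and no slot-assignment sum is needed. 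Your route instead expands $P(\Gamma,k)$ as a sum over placements of the remaining longer-edge midpoints, with one multinomial per slot, and pulls the factor out of the slot-$j_0$ multinomial of each term. That does give the conclusion, but it introduces exactly the bookkeeping you flag and do not resolve: the index set of your outer sum should be $G$-orbits of slot assignments, not slot assignments themselves, because an automorphism can swap two equivalent longer-edge midpoints placed in \emph{different} slots. For example, for the template on $\{0,1,2\}$ with two parallel weight-$1$ edges $0\to 2$ and one weight-$2$ edge $0\to 1$, summing your per-slot multinomials over all four placements of the two parallel midpoints gives $\tfrac{1}{2}\bigl[(k-1)(k-2)(k-3)+4k(k-2)(k-3)+k(k+1)(k-3)\bigr]$, which is \emph{not} $P(\Gamma,k)=(k-1)(2k-1)(k-3)$; summing over the three orbits is what recovers it. Fortunately the divisibility claim survives either way (each summand carries the $(k-3)$-type factor), so your argument is sound in substance, but the paper's direct factoring sidesteps the orbit-counting subtlety entirely, which is why its proof is shorter.
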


\begin{proof}
Recall from Section~\ref{sec:nodepolys} that, for $k \ge k_{\min}(\Gamma)$, the polynomial $P(\Gamma,
k)$ records the number of linear extension (up to equivalence) of some poset
$\Gamma_{(k)}$ which is obtained from $\Gamma$ by first adding
$k+j-1-\varkappa_j(\Gamma)$ ``short edges'' connecting $j - 1$ to $j$, for $1
\le j \le l(\Gamma)$, and then subdividing each edge of the resulting
graph by introducing a new vertex for each edge.

Using the notation from the last paragraph notice that $k_{\min} + j_0
- 1 = \varkappa_{j_0}(\Gamma)$, and thus $\Gamma_{(k)}$ has $k-k_{\min}$ ``short
edges'' between $j_0 - 1$ and $j_0$. Every linear extension of $\Gamma_{(k)}$ can be obtained
by first linearly ordering the midpoints of these $k-k_{\min}$ ``short
edges'' and the midpoints of the $s(\Gamma)$ many edges of $\Gamma$
connecting $j_0 - 1$ and $j_0$ before completing the linear order to
all vertices of  $\Gamma_{(k)}$. Therefore, the polynomial $(k-k_{\min} + 1) \cdots (k -
k_{\min} + s(\Gamma))$ divides $P(\Gamma, k)$.
\end{proof}

Before we can prove Theorem~\ref{thm:threshold} we need a last
technical lemma.

\begin{lemma}
\label{lem:technicalinequality}
Using the notation from above we have, for each template $\Gamma$,
\[
k_{\min}(\Gamma) - s(\Gamma) + l(\Gamma) - \varepsilon(\Gamma) \le \delta(\Gamma) + 1.
\]
\end{lemma}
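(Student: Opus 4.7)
The plan is to prove the equivalent inequality
\[
\delta(\Gamma) \ge \varkappa_{j_0}(\Gamma) + \bigl(l(\Gamma) - j_0\bigr) - \varepsilon(\Gamma) - s(\Gamma),
\]
obtained by substituting the identity $k_{\min}(\Gamma) = \varkappa_{j_0}(\Gamma) - j_0 + 1$. The overall strategy is to split $\Gamma$ at the distinguished vertex $j_0$. I would partition the edges of $\Gamma$ into $E_{\text{low}}$ (those with $k_e < j_0$), $E_{\text{cross}}$ (those with $i_e < j_0 \le k_e$), and $E_{\text{high}}$ (those with $i_e \ge j_0$), and then replace each strictly crossing edge $i_e \to k_e$ (having $i_e < j_0 < k_e$) by two pieces $i_e \to j_0$ and $j_0 \to k_e$, each of weight $w_e$. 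This produces a ``low part'' $\Gamma^{\text{low}}$ on $\{0,\dots,j_0\}$ and a ``high part'' $\Gamma^{\text{high}}$ on $\{j_0,\dots,l\}$ with cogenera $\delta^{\text{low}}$ and $\delta^{\text{high}}$ respectively. A direct computation (each cut turns a single $-1$ in the definition of $\delta$ into two $-1$'s) gives $\delta(\Gamma) = \delta^{\text{low}} + \delta^{\text{high}} + q$, where $q$ is the number of strictly crossing edges.

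Next I would establish two sub-bounds. The \emph{high-part bound} asserts $\delta^{\text{high}} \ge l - j_0 - \varepsilon(\Gamma)$; the key observation is that every inner vertex of $\Gamma^{\text{high}}$ (i.e., $j_0+1,\dots,l-1$) remains covered after cutting, since a covering edge $i \to k$ of such a vertex with $i < j_0$ yields an edge $j_0 \to k$ in $\Gamma^{\text{high}}$ that still covers it. Combining the coverage bound $\sum_e (\ell^h_e - 1) \ge l - j_0 - 1$ with the algebraic decomposition $\ell w - 1 = (\ell - 1) + \ell(w - 1)$ handles the case $\varepsilon(\Gamma) = 1$; when $\varepsilon(\Gamma) = 0$, some edge arriving at $l$ has weight at least $2$ and contributes one extra unit to $\delta^{\text{high}}$. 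The \emph{low-part bound} asserts $\delta^{\text{low}} \ge \varkappa_{j_0}(\Gamma) - s(\Gamma) - q$; to prove it I would drop the nonnegative $E_{\text{low}}$ contribution and expand $(j_0 - i_e)w_e = w_e + (j_0 - i_e - 1)w_e$. The first term sums to $\varkappa_{j_0}(\Gamma)$ over cross edges, while the second contributes at least $1$ for every ``long'' cross edge ($i_e \le j_0 - 2$) and vanishes for ``short'' cross edges ($i_e = j_0 - 1$).

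Adding the two sub-bounds together with the identity $\delta(\Gamma) = \delta^{\text{low}} + \delta^{\text{high}} + q$ yields the reformulated inequality. The main obstacle, and the most delicate step, is the low-part bound: $s(\Gamma)$ counts only short cross edges \emph{ending at} $j_0$, whereas $(j_0 - i_e - 1)w_e$ vanishes for all cross edges starting at $j_0 - 1$, including those that strictly cross $j_0$. Letting $t$ denote the number of such additional slack-less edges, one has $t \le q$ and hence $\delta^{\text{low}} \ge \varkappa_{j_0}(\Gamma) - s(\Gamma) - t \ge \varkappa_{j_0}(\Gamma) - s(\Gamma) - q$; the extra $-q$ is then absorbed by the $+q$ coming from the cutting identity. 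Verifying this bookkeeping carefully is the crux of the argument.
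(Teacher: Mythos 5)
Your proof is correct. Both your argument and the paper's begin by substituting $k_{\min}(\Gamma) = \varkappa_{j_0}(\Gamma) - j_0 + 1$ and then splitting the analysis at the distinguished vertex $j_0$, but the mechanics differ. The paper first reduces to the case where every edge satisfies $i < j_0 \le k$ by deleting the non-crossing edges, verifying that the deletion cannot hurt the inequality (via the claim $l(\Gamma) - \eps(\Gamma) - \bigl(l(\Gamma') - \eps(\Gamma')\bigr) \le \delta(\Gamma) - \delta(\Gamma')$); it then performs a direct computation on the surviving cross edges, with a final case analysis on whether $j_0 \in \{1, l(\Gamma)\}$ or $1 < j_0 < l(\Gamma)$. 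You instead cut the strictly crossing edges at $j_0$, which yields the exact identity $\delta(\Gamma) = \delta^{\text{low}} + \delta^{\text{high}} + q$, and you bound the two halves separately: the high part via a clean counting of covered internal vertices (plus one extra unit when $\eps(\Gamma) = 0$), and the low part by expanding $(j_0 - i)w = w + (j_0 - i - 1)w$. Your decomposition sidesteps the paper's ``WLOG all edges cross'' reduction and avoids the case split on the position of $j_0$, and the $+q$ from the cutting identity absorbs exactly the slack-less strictly-crossing edges starting at $j_0 - 1$ that $s(\Gamma)$ does not count --- the point you flag as the crux, and which you handle correctly. The price is that you must verify the covering condition survives the cut on the high side, which you do by noting that each covering edge that originates below $j_0$ contributes a piece $j_0 \to k$ that still covers the vertex in question. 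Overall your version makes the bookkeeping more transparent and is a legitimate alternative.
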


\begin{proof}
As before, let $j_0$ be the smallest $j$ in $\{1, \dots, l(\Gamma) \}$
with $k_{\min} + j - 1 = \varkappa_j(\Gamma)$. It suffices to show
that $
\varkappa_{j_0}(\Gamma) - j_0 - s(\Gamma) + l(\Gamma) - \eps(\Gamma) \le
\delta(\Gamma) $.

Let $\Gamma'$ be the template obtained from $\Gamma$ by removing all
edges $i \to k$ with either $k < j_0$ or $i \ge j_0$. It is
easy to see that $l(\Gamma) - \eps(\Gamma)
- (l(\Gamma') - \eps(\Gamma')) \le \delta(\Gamma) -
\delta(\Gamma')$. Thus,  we can assume without
loss of generality that all edges $i \to k$ of $\Gamma$ satisfy $i <
j_0 \le k$. Therefore, as $\varkappa_{j_0}(\Gamma) =
\sum_{
\text{ edges } e \text{ of }\Gamma
} \wt(e)$ it suffices to
show that
\begin{equation}
\label{eqn:technicalinequality1}
l(\Gamma) - \eps(\Gamma) \le \sum_{\text{edges }e \text{ of } \Gamma} \Big[ \wt(e) (\len(e) -
1)  - 1 \Big] + s(\Gamma) + j_0,
\end{equation}
where $\len(e)$ is the length of an edge $e$. The contribution of the $s(\Gamma)$
edges of $\Gamma$ between $j_0 - 1$  and $j_0$ to the
sum is $-s(\Gamma)$, thus the right-hand-side of (\ref{eqn:technicalinequality1})
equals
\begin{equation}
\label{eqn:technicalinequality2}
\sum \Big[ \wt(e) (\len(e) -
1) - 1 \Big]  + j_0 
\end{equation}
with the sum now running over all edges of $\Gamma$ of length at least
2. If there are no such edges, then $l(\Gamma) = 1$ and we are
done. Otherwise, if $\eps(\Gamma) = 1$, expression
(\ref{eqn:technicalinequality2}) equals $\sum (\len(e) - 2) + j_0$,
which is $\ge l(\Gamma) - 2 + j_0$ or $\ge  l(\Gamma) - 3 + j_0$ if $j_0 \in \{1,
l(\Gamma) \}$ or
$1 < j_0 < l(\Gamma)$, respectively (by considering only edges adjacent
to vertices $0$ and $l(\Gamma)$ of $\Gamma$). In either case the result
follows.

If $\eps(\Gamma) = 0$ then expression
(\ref{eqn:technicalinequality2}) is $\ge l(\Gamma) + (l(\Gamma) - 3 +
j_0)$ or $\ge l(\Gamma) -2 + j_0$ if $j_0 \in \{ 1, l(\Gamma) \}$ or
$1 < j_0 < l(\Gamma)$, respectively. This completes the proof.
\end{proof}

\begin{proof}[Proof of Theorem~\ref{thm:threshold}]
By Lemma~\ref{lem:lowerlimits} and repeated application of
Lemmata~\ref{lem:discretelem} and~\ref{lem:templatepolyevaluations} it suffices to show that $d \ge \delta$ simultaneously implies
\begin{equation}
\label{eq:manyinequal}
\begin{split}
d \ge & \, l(\Gamma_m) - \eps(\Gamma_m) +k_{\min}(\Gamma_m) - s(\Gamma_m)-1, \\
d \ge & \, l(\Gamma_m) - \eps(\Gamma_m) + l(\Gamma_{m-1}) +
      k_{\min}(\Gamma_{m-1}) - s(\Gamma_{m-1})- 2, \\
& \quad \quad \quad \quad \quad \quad \vdots \\
d \ge & \, l(\Gamma_m) - \eps(\Gamma_m) + l(\Gamma_{m-1}) + \cdots +
l(\Gamma_1) + k_{\min}(\Gamma_1) - s(\Gamma_1)- m,
\end{split}
\end{equation}
for all collections of templates $(\Gamma_1, \dots, \Gamma_m)$ with
$\sum_{i =1}^m \delta(\Gamma_i) = \delta$.

The first inequality is a direct consequence of Lemma~\ref{lem:technicalinequality}.
For the other inequalities, notice that $l(\Gamma) - \eps(\Gamma) \le
\delta(\Gamma)$ for all templates $\Gamma$, hence
\begin{displaymath}
l(\Gamma_m) - \eps(\Gamma_m) -1 \le
\delta(\Gamma_m) -1
\end{displaymath}
and 
\begin{displaymath}
l(\Gamma_i) - 1 \le \delta(\Gamma_i), \quad \text{for } 2 \le i \le
m-1.
\end{displaymath}
By Lemma~\ref{lem:technicalinequality} we have
\begin{displaymath}
l(\Gamma_1) + k_{\min}(\Gamma_1) - s(\Gamma_1)- 1 \le
\delta(\Gamma_1) +1
\end{displaymath}
as $\varepsilon(\Gamma_1) \le 1$, and
the right-hand-side of the last inequality of (\ref{eq:manyinequal}) is $\le
\sum_{i=1}^m\delta(\Gamma_i) = \delta \le d$. The proof of the other
inequalities is very similar.
\end{proof}

\section{Coefficients of Node Polynomials}
\label{sec:coefficients}

The goal of this section is to present an algorithm for the
computation of the coefficients of $N_\delta(d)$, for general
$\delta$. The algorithm can be used to prove Theorem~\ref{thm:firstcoeffs} and
thereby confirm and extend a conjecture of P.~Di Francesco and
C.~Itzykson in \cite{DI}
where they conjectured
the $7$ terms of $N_\delta(d)$ of largest degree.

Our algorithm should be able to find formulas for arbitrarily many coefficients
of $N_\delta(d)$. We prove correctness of our algorithm in this
section. The algorithm rests on the polynomiality of solutions of
certain polynomial difference equations (see (\ref{eqn:localrec})).

First, we fix
some notation building on terminology of Section~\ref{sec:nodepolys}. By Remark~\ref{rmk:improvements} we can
replace the polynomials $P(\Gamma,k)$ in (\ref{eqn:5.13}) by
the product
$\mu(\Gamma) P(\Gamma,k)$, thereby removing the product $\prod \mu(\Gamma_i)$ of the
template multiplicities. In this section we write $P^*(\Gamma,k)$ for
$\mu(\Gamma) P(\Gamma,k)$.
For integers $i \ge 0$ and $a \ge 0$ let $M_i(a)$
denote the matrix of the linear map
\begin{equation}
\label{eqn:linmap}
f(k) \mapsto \sum_{\Gamma: \delta(\Gamma) = i} \sum_{k=k_{\min}(\Gamma)}^{n-l(\Gamma)}  P^*(\Gamma, k) \cdot f(k),
\end{equation}
where $f(k) = c_0 k^a + c_1 k^{a-1} + \cdots$, a polynomial of degree
$a$, is mapped to the polynomial $M_i(a)(f(k)) = d_0 n^{a+i +1} + d_1 n^{a+i} +
\cdots$ in $n$. (By Lemma~\ref{lem:faulhaber} and the proof of
Lemma~\ref{lem:independence} the image has degree $a + i + 1$.) Hence
$M_i(a) {\bf c} = {\bf d}$. Similarly, define
$M_i^{\text{end}}(a)$ to be the matrix of the linear map 
\begin{equation}
\label{eqn:linmap2}
f(k) \mapsto \sum_{\Gamma: \delta(\Gamma) = i} \sum_{k=k_{\min}(\Gamma)}^{n-l(\Gamma)+\eps(\Gamma)}  P^*(\Gamma, k) \cdot f(k).
\end{equation}

Later we will consider
square sub-matrices of $M_i(a)$ and $M_i^{\text{end}}(a)$ by
restriction to the first few rows
and columns which will be
denoted $M_i(a)$ and $M_i^{\text{end}}(a)$ as well. Note that $M_i(a)$
and $M_i^{\text{end}}(a)$
are lower triangular. For example, for $a$ large enough,
\begin{displaymath}
M_1(a) = 
\begin{bmatrix}
\frac{6}{a+2} & 0 & 0 & 0 & 0 & \cdots \\
-\frac{5a+8}{a+1} & \frac{6}{a+1} & 0 & 0 & 0 & \cdots\\
\frac{5}{2}a + 3 & -\frac{5a+3}{a} & \frac{6}{a} & 0 & 0 & \cdots \\
-\frac{1}{4}(4a+1)a & \frac{5}{2} a + \frac{1}{2}& -\frac{5a-2}{a-1} & \frac{6}{a-1} & 0 & \cdots\\
\frac{1}{40}(13a^2 - 20a+7)a & -a^2+\frac{7}{4}a -\frac{3}{4} & \frac{5}{2}a-2 & -\frac{5a-7}{a-2} &
\frac{6}{a-2} & \cdots \\
\vdots & \vdots & \vdots & \vdots & \vdots & \ddots
\end{bmatrix}
.
\end{displaymath}


\begin{lemma}
\label{lem:independence}
The first $a + i$ rows of $M_i(a)$ and $M_i^{\text{end}}(a)$ are
independent of the lower limits of summation in (\ref{eqn:linmap}) and
(\ref{eqn:linmap2}), respectively.
\end{lemma}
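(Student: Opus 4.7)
The plan is to rewrite each inner sum by extending the lower limit down to zero and subtracting the missing piece: for a fixed template $\Gamma$ with $\delta(\Gamma)=i$,
\[
\sum_{k=k_{\min}(\Gamma)}^{U(\Gamma)} P^*(\Gamma,k)\,f(k) \;=\; \sum_{k=0}^{U(\Gamma)} P^*(\Gamma,k)\,f(k) \;-\; \sum_{k=0}^{k_{\min}(\Gamma)-1} P^*(\Gamma,k)\,f(k),
\]
where $U(\Gamma)=n-l(\Gamma)$ in (\ref{eqn:linmap}) and $U(\Gamma)=n-l(\Gamma)+\eps(\Gamma)$ in (\ref{eqn:linmap2}). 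The first term on the right depends on $\Gamma$ and $n$ but is visibly independent of the lower limit $k_{\min}(\Gamma)$, while the second term depends on $k_{\min}(\Gamma)$ but is a finite numerical sum and thus a constant with respect to $n$.

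By Faulhaber's formula (Lemma~\ref{lem:faulhaber}) applied to the polynomial $P^*(\Gamma,k)\,f(k)$ in $k$, followed by the substitution that puts $U(\Gamma)$ in place of the upper limit, the first summand on the right is a polynomial in $n$ of degree at most $\deg P^*(\Gamma,k)+\deg f+1 \le i+a+1$, whose coefficients depend linearly on the coefficients of $f$. The subtracted piece only shifts the coefficient of $n^0$. Hence for every template $\Gamma$ of cogenus $i$, altering $k_{\min}(\Gamma)$ perturbs only the constant term of the resulting polynomial in $n$; by linearity of the outer sum over templates with $\delta(\Gamma)=i$, the same conclusion holds for the full maps (\ref{eqn:linmap}) and (\ref{eqn:linmap2}).

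Reading off coefficients in the monomial basis $\{n^{a+i+1},n^{a+i},\ldots,n^0\}$ one sees that only the single row of $M_i(a)$ (resp.\ $M_i^{\text{end}}(a)$) computing the coefficient of $n^0$ can depend on the choice of lower limits; a fortiori the first $a+i$ rows, which compute the coefficients of $n^{a+i+1},\ldots,n^2$, are independent of those limits. The only minor technical point is the a priori degree bound $\deg P(\Gamma,k)\le\delta(\Gamma)$ used to pin down the overall degree $a+i+1$, but this is clear from the construction of $P(\Gamma,k)$ as a polynomial counting linear extensions of the poset $\Gamma_{(k)}$. I do not anticipate any serious obstacle: the content of the lemma is essentially the observation that $\sum_{k=c}^{n}$ and $\sum_{k=0}^{n}$ of a polynomial in $k$ differ by a constant in $n$, combined with the linearity of the template expansion.
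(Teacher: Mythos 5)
Your proposal is correct and follows the same line of reasoning as the paper: establish the degree bound $\deg P^*(\Gamma,k)\le\delta(\Gamma)=i$, so that discrete integration produces a polynomial of degree $a+i+1$ in $n$, and then observe that changing a lower limit of summation alters the result only by a constant, hence only the $n^0$ coefficient. The paper's own proof is terser — it explicitly records only the degree computation and leaves the constant-shift observation implicit — so your version is effectively the same argument with that last step spelled out (and it in fact yields the slightly sharper fact that the first $a+i+1$ rows are unaffected, of which the lemma's claim is a special case).
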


\begin{proof}
It is an easy consequence of the proof
of \cite[Lemma 5.7]{FM} that the polynomial $P^*(\Gamma,k)$ associated with a template $\Gamma$ has
degree $\le \delta(\Gamma)$. Equality is attained by the
template $\Gamma$ on vertices $0, 1, 2$ with $i$ edges connecting $0$ and $2$
(so $\delta(\Gamma) = i$). As discrete integration
of a polynomial increases the degree by $1$ the polynomial on the
right-hand-side of (\ref{eqn:linmap}) has degree $1 + i +
a$.
\end{proof}

The basic idea of the algorithm is that templates
with higher cogenera do not contribute to higher degree terms of
the node polynomial. With this in mind we define, for each finite
collection $(\Gamma_1, \dots, \Gamma_m)$ of templates, its \emph{type}
$\tau = (\tau_2, \tau_3, \dots)$, where $\tau_i$ is the number
of templates in $(\Gamma_1, \dots, \Gamma_m)$ with cogenus equal to
$i$, for $i \ge 2$. Note that we do not record the number of templates
with cogenus equal to $1$. 

To collect the contributions of all collections of templates
with a given type $\tau$, let $\tau = (\tau_2, \tau_3, \dots)$ and fix $\delta \ge \sum_{j \ge
  2}\tau_j$ (so that there exist template collections $(\Gamma_1,
\dots, \Gamma_m)$ of type
$\tau$ with $\sum \delta(\Gamma_j) = \delta$). We define
two (column) vectors $C_\tau(\delta)$ and
$C_\tau^{\text{end}}(\delta)$ as the coefficient vectors, listed in decreasing order, of the polynomials
\begin{equation}
\label{eqn:Calpha}
\sum_{(\Gamma_1, \dots, \Gamma_m)}
\sum_{k_m=k_{\min}(\Gamma_m)}^{n-l(\Gamma_m)} \hspace{-4mm} P^*(\Gamma_m, k_m)
\cdots
\sum_{k_1=k_{\min}(\Gamma_1)}^{k_2-l(\Gamma_1)} \hspace{-3mm} P^*(\Gamma_1, k_1)
\end{equation}
and 
\begin{equation}
\label{eqn:Calphaend}
\sum_{(\Gamma_1, \dots, \Gamma_m)}
\sum_{k_m=k_{\min}(\Gamma_m)}^{n-l(\Gamma_m)+\eps(\Gamma)} \hspace{-4mm} P^*(\Gamma_m, k_m)
\sum_{k_{m-1}=k_{\min}(\Gamma_{m-1})}^{k_m-l(\Gamma_{m-1})} \hspace{-3mm}\cdots
\sum_{k_1=k_{\min}(\Gamma_1)}^{k_2-l(\Gamma_1)} \hspace{-3mm} P^*(\Gamma_1, k_1)
\end{equation}
in the indeterminate $n$, where the respective first sums are over all ordered collections of templates of
type $\tau$.

It might look like $C_\tau(\delta)$ is a product of some matrices
$M_i(a)$ applied to the polynomial $1$. However, this is not the case. For
example, note that
\begin{displaymath}
C_{(0,0,\dots)}(2) =  
\begin{bmatrix}
\frac{9}{2} \\
-34 \\
88 \\
- \frac{179}{2} \\
30 \\
0 \\
\vdots
\end{bmatrix}
\neq 
\begin{bmatrix}
\frac{9}{2} \\
-34 \\
88 \\
- \frac{179}{2} \\
27 \\
0 \\
\vdots
\end{bmatrix}
=M_1(2) \cdot M_1(0) \cdot
\begin{bmatrix}
1 \\
0 \\
0\\
0 \\
0 \\
0 \\
\vdots
\end{bmatrix}.
\end{displaymath}
This is because, when iterated discrete integrals are evaluated
symbolically, the lower limits
of integration of the outer sums can change depending on the limits of
the inner sums (cf.\ Lemma~\ref{lem:lowerlimits}). This
observation makes it necessary to compute initial values for
recursions (described later) up to a large enough $\delta$. 

Before we can state the main recursion we need two more notations. For
a type $\tau = (\tau_2, \tau_3, \dots)$ and $i \ge 2$ with
$\tau_i >0$ define a new type $\tau\down_i$ via
$(\tau\down_i)_i = \tau_i -1$ and $(\tau\down_i)_j = \tau_j$
for $j \neq i$. Furthermore, let $\defect(\tau) = \sum_{j\ge2}
(j-1)\tau_j$ be the \emph{defect} of $\tau$. The following lemma
justifies this terminology.

\begin{lemma}
\label{lem:degree}
The polynomials (\ref{eqn:Calpha}) and (\ref{eqn:Calphaend}) are of
degree $2 \delta - \defect(\tau)$.
\end{lemma}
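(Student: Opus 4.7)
The plan is to prove this by matching upper and lower bounds on the degree.

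First I would count the number $m$ of templates in any type-$\tau$ collection with $\sum_i \delta(\Gamma_i) = \delta$. The number of cogenus-$1$ templates must be $\tau_1 = \delta - \sum_{j \ge 2} j \tau_j$, so
\[
m = \tau_1 + \sum_{j \ge 2} \tau_j = \delta - \sum_{j \ge 2}(j-1)\tau_j = \delta - \defect(\tau).
\]

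For the upper bound, recall from the proof of Lemma~\ref{lem:independence} that $\deg P^*(\Gamma,k) \le \delta(\Gamma)$, and by Faulhaber's formula (Lemma~\ref{lem:faulhaber}) each discrete summation raises the degree by exactly $1$. Applied to the $m$-fold iterated sum indexed by a single collection $(\Gamma_1,\dots,\Gamma_m)$ of type~$\tau$, this yields a polynomial in $n$ of degree at most $\sum_i \delta(\Gamma_i) + m = \delta + (\delta - \defect(\tau)) = 2\delta - \defect(\tau)$. Summing over all such collections preserves this bound for both \eqref{eqn:Calpha} and \eqref{eqn:Calphaend}.

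For the matching lower bound I would use a positivity argument to rule out cancellation across the outer sum. Since $P(\Gamma,k)$ counts (equivalence classes of) linear extensions and $\mu(\Gamma) > 0$, the polynomial $P^*(\Gamma,k)$ has a \emph{positive} leading coefficient. By Lemma~\ref{lem:faulhaber} the leading coefficient of $\sum_{k=a}^{n-b} f(k)$ equals $c_0/(s+1)$, where $c_0$ and $s$ are the leading coefficient and degree of $f$; in particular this is independent of $a$ and $b$, and preserves positivity. Thus every collection $(\Gamma_1,\dots,\Gamma_m)$ of type~$\tau$ in which each $\Gamma_i$ achieves $\deg P^*(\Gamma_i,k) = \delta(\Gamma_i)$ contributes a polynomial of degree exactly $2\delta - \defect(\tau)$ with strictly positive leading coefficient, while every other collection contributes a polynomial of strictly smaller degree.

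It then suffices to exhibit one such collection. Take $\tau_1$ copies of the two-vertex template with a single edge of weight~$2$ (for which $P^*(\Gamma,k) = 4(k-1)$ has degree $1 = \delta(\Gamma)$) together with, for each $j \ge 2$, $\tau_j$ copies of the template on vertices $\{0,1,2\}$ with $j$ edges of weight~$1$ joining $0$ and $2$, which attains $\deg P^* = j = \delta(\Gamma)$ by the explicit template exhibited in the proof of Lemma~\ref{lem:independence}. Positivity then precludes cancellation in the outer sum, so the total degree is exactly $2\delta - \defect(\tau)$. The main obstacle will be the positivity step: one must verify carefully that the $c_0/(s+1)$ formula for the leading coefficient of a discrete antiderivative persists uniformly across the shifting lower limits produced by Lemma~\ref{lem:lowerlimits}, so that contributions from distinct template collections of type $\tau$ cannot conspire to cancel the top-degree monomial.
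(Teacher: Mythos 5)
Your proof is correct and follows essentially the same route as the paper: bound $\deg P^*(\Gamma,k) \le \delta(\Gamma)$ with equality attained, note that each discrete summation raises the degree by exactly one, and compute $m = \delta - \defect(\tau)$ from the type. The only difference is that you spell out the positivity/no-cancellation argument for the lower bound explicitly, which the paper leaves implicit in its appeal to the proof of Lemma~\ref{lem:independence}.
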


\begin{proof}
Let $(\Gamma_1, \dots, \Gamma_m)$ be a collection of templates with
$\sum_{i=1}^m \delta(\Gamma_i) = \delta$ and type~$\tau$. Then, by
applying the argument in the proof of Lemma~\ref{lem:independence}
to each $\Gamma_i$, the polynomials (\ref{eqn:Calpha}) and (\ref{eqn:Calphaend}) have degree
$\delta + m$. The result follows as
\begin{equation*}
\begin{split}
\delta - \defect(\tau) & =\sum_{i=1}^m \delta(\Gamma_i) - \sum_{j\ge
  2}(j-1) \tau_j \\
& = \sum_{i=1}^m \delta(\Gamma_i)  - \sum_{j \ge 2}\left[ \left(
  \sum_{i:\delta(\Gamma_i) = \tau_j} \delta(\Gamma_i) \right) -
\tau_j \right] \\
& = \# \{i: \delta(\Gamma_i) = 1 \} + \sum_{j \ge 2} \tau_j = m.
\end{split}
\end{equation*}

\end{proof}

The last lemma makes precise
which collections of templates contribute to which coefficients of
$N_\delta(d)$. Namely, the first $N$ coefficients of $N_\delta(d)$ of
largest degree
depend only on collections of templates with types $\tau$ such that
$\defect(\tau) < N$. The following recursion is the heart of the algorithm.

\begin{proposition}
\label{prop:rec}
For every type $\tau$ and integer $\delta$ large enough, it holds that
\begin{equation}
\label{eqn:rec}
\begin{split}
C_\tau(\delta) = & \sum_{i: \tau_i \neq 0} M_i \big(2 \delta -i-1 -\defect(\tau)\big) C_{\tau\down i}(\delta-i) \\
& + M_1\big(2\delta - 2 - \defect (\tau)\big) C_\tau(\delta-1).
\end{split}
\end{equation}
More precisely, if we restrict all matrices $M_i$ to be square of size $N-\defect(\tau)$
and all $C_\tau$ to be vectors of length $N-\defect(\tau)$, then
recursion (\ref{eqn:rec})
holds for
\begin{displaymath}
\delta \ge \max \left ( \left
  \lceil\frac{N+1}{2} \right \rceil,
\sum_{j\ge 2}j \tau_j \right ).
\end{displaymath}
\end{proposition}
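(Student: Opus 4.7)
The plan is to prove the recursion by peeling off the outermost template $\Gamma_m$ from each collection $(\Gamma_1,\dots,\Gamma_m)$ of type $\tau$ and total cogenus $\delta$ contributing to~(\ref{eqn:Calpha}), and identifying the remaining inner iterated sum as the polynomial (in the outer summation variable) whose coefficient vector is $C_{\tau'}(\delta')$ for a reduced type $\tau'$ and cogenus $\delta'$. First, I would partition the collections according to $i = \delta(\Gamma_m)$. Since every template has cogenus at least $1$, either $i \ge 2$, in which case $(\Gamma_1,\dots,\Gamma_{m-1})$ has type $\tau\down_i$ and total cogenus $\delta - i$, or $i = 1$, in which case it has type $\tau$ and total cogenus $\delta - 1$. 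The hypothesis $\delta \ge \sum_{j\ge 2} j\tau_j$ ensures at least one such collection exists (the remaining cogenus being absorbed by cogenus-$1$ templates); if this fails, both sides of (\ref{eqn:rec}) vanish by convention.

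For each fixed outermost $\Gamma_m$, iterated application of Lemma~\ref{lem:lowerlimits} together with Faulhaber's formula (Lemma~\ref{lem:faulhaber}) evaluates the inner sum over $(\Gamma_1,\dots,\Gamma_{m-1})$ and $k_1,\dots,k_{m-1}$ to a polynomial in $k_m$ valid for $k_m$ sufficiently large, and summing over the inner collections produces exactly the polynomial $F_{\tau'}^{(\delta')}(k_m)$ whose coefficient vector is $C_{\tau'}(\delta')$. By Lemma~\ref{lem:degree} it has degree $a = 2\delta - i - 1 - \defect(\tau)$ (if $i \ge 2$) or $a = 2\delta - 2 - \defect(\tau)$ (if $i = 1$), matching the arguments of $M_i(a)$ in (\ref{eqn:rec}). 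By the definition of $M_i(a)$, applying it to $C_{\tau'}(\delta')$ yields the coefficient vector of
\[
\sum_{\Gamma:\,\delta(\Gamma) = i}\;\sum_{k = k_{\min}(\Gamma)}^{n - l(\Gamma)} P^*(\Gamma,k)\,F_{\tau'}^{(\delta')}(k).
\]
This has the same integrand as the actual contribution of these $\Gamma_m$ to (\ref{eqn:Calpha}), but Lemma~\ref{lem:lowerlimits} shows that in the genuine evaluation the lower limit of the $k_m$-sum is bumped up to $\max\bigl(k_{\min}(\Gamma_m),\,k_{\min}(\Gamma_{m-1}) + l(\Gamma_{m-1}),\,\dots\bigr)$. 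The discrepancy is a finite sum over $k_m$ in a range independent of $n$, hence a constant in $n$. Summing over both cases $i \ge 2$ and $i = 1$, (\ref{eqn:rec}) then holds up to an additive constant in $n$.

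To finish, I truncate to $N - \defect(\tau)$ rows. Lemma~\ref{lem:independence} guarantees that the top $N - \defect(\tau)$ rows of $M_i(a)$ are intrinsic as soon as $a + i \ge N - \defect(\tau)$, which, after substituting the values of $a$ above, reduces to $2\delta \ge N + 1$, i.e.\ to $\delta \ge \lceil (N+1)/2 \rceil$. Under this same bound, the polynomial in $n$ defined by (\ref{eqn:Calpha}) has degree $2\delta - \defect(\tau)$ by Lemma~\ref{lem:degree}, so the top $N - \defect(\tau)$ entries of $C_\tau(\delta)$ correspond to monomials $n^k$ with $k \ge 2\delta - N + 1 \ge 1$. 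The constant-in-$n$ discrepancy therefore cannot reach these entries, and the truncated recursion is exact.

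I expect the main obstacle to be the second step: tracking precisely how the lower-limit bumping from Lemma~\ref{lem:lowerlimits} propagates through all $m-1$ nested sums, and confirming that what survives after substituting the symbolic inner polynomial $F_{\tau'}^{(\delta')}(k_m)$ back into the outer sum is genuinely constant in $n$ rather than a polynomial of positive degree. The remainder of the argument is linear-algebraic bookkeeping with degrees and matrix ranks.
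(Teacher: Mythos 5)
Your proposal is correct and follows essentially the same route as the paper: partition the template collections in (\ref{eqn:Calpha}) by the cogenus $i$ of the outermost template $\Gamma_m$, identify the inner nested sum with the polynomial defining $C_{\tau\down i}(\delta-i)$ (resp.\ $C_\tau(\delta-1)$) of degree $2\delta-i-1-\defect(\tau)$ via Lemma~\ref{lem:degree}, and use $\delta\ge\lceil(N+1)/2\rceil$ to make the truncated matrices insensitive to the lower summation limits and $\delta\ge\sum_j j\tau_j$ to make the identity meaningful. Your explicit argument that the lower-limit bumping from Lemma~\ref{lem:lowerlimits} only contributes a constant in $n$, which cannot reach the top $N-\defect(\tau)$ coefficients, is precisely the content the paper delegates to Lemma~\ref{lem:independence}, so the two proofs coincide in substance.
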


\begin{proof}
The coefficient vector $C_\tau(\delta)$ is defined by a sum that runs
over all collections of templates $(\Gamma_1, \dots, \Gamma_m)$ of
type $\tau$ (see (\ref{eqn:Calpha})). Partition the set of such collections by
putting $\delta(\Gamma_m) = 1$, or $\delta(\Gamma_m) = 2$,
and so forth. This partitioning splits expression (\ref{eqn:Calpha}) exactly as
in (\ref{eqn:rec}). 

A summand can be written as a product of some matrix $M_i$ and some
vector $C_{\tau\down i}$ if $\delta$ is large enough, namely if $M_i$
does not depend on the lower limits in (\ref{eqn:Calpha}). If we can factor then the polynomials  (\ref{eqn:Calpha}) defining $C_{\tau\down  i}(\delta-i)$ and $C_{\tau}(\delta -1)$ have degrees
\begin{displaymath}
2(\delta -i) - \defect(\tau\down i) =2\delta -2i - \defect(\tau)+
(i-1) = 2\delta - i -1 -\defect(\tau)
\end{displaymath}
by Lemma~\ref{lem:degree} and, similarly, $2\delta -2 -
\defect(\tau)$, respectively. By Lemma~\ref{lem:independence},  if the matrix
$M_i(2\delta-i-1-\defect(\tau))$ is of
size $N - \defect(\tau)$, then it does not depend on the lower limits if and
only if $\delta \ge \frac{N+1}{2}$. In order for $C_\tau(\delta)$ to
be defined (and the above identity to be meaningful) we need to impose
$\delta \ge \sum_{j \ge 2} j \tau_j$.
\end{proof}  

\begin{remark}
\label{rmk:initialdelta}
Later, when we formulate the algorithm, we need to solve
  recursion (\ref{eqn:rec}) together with an initial condition in
  order to obtain an explicit formula for the first
  $N-\defect(\tau)$ entries of
  $C_\tau(\delta)$. It suffices to take
\begin{equation}
\label{eqn:initialdelta}
\delta_0(\tau) \stackrel{\text{def}}{=} \max \left ( \left
  \lceil\frac{N-1}{2} \right \rceil,
\sum_{j\ge 2}j \tau_j \right )
\end{equation}
as for any $\delta > \delta_0(\tau)$  the vector $C_\tau(\delta)$
of length $N-\defect(\tau)$ can be written in terms of matrices
$M_i$ and vectors $C_{\tau'}(\delta')$ for various types $\tau'$
and integers $\delta' < \delta$.
\end{remark}

We propose Algorithm \ref{alg:nodecoeffs} for the computation of the
coefficients of the node polynomial $N_\delta(d)$. We explain how to solve recursion  (\ref{eqn:rec}) below.

\begin{algorithm}[tb]
\label{alg:nodecoeffs}
\SetAlgoLined
\KwData{A positive integer $N$.}
\KwResult{The coefficient vector $C$ of the first $N$ coefficients of $N_\delta(d)$.}
\Begin{
Compute all templates $\Gamma$ with $\delta(\Gamma) \le N$\;

\ForAll{types $\tau$ with $\defect(\tau) < N$}{
Compute initial values $C_\tau(\delta_0(\tau))$ using (\ref{eqn:Calpha}), with
$\delta_0(\tau)$ as in (\ref{eqn:initialdelta})\;

Solve recursion (\ref{eqn:rec}) for first
$N-\defect(\tau)$ coordinates of $C_\tau(\delta)$\;
\text{Set }
 \begin{align*}
\hspace{-40mm}
C_\tau^{\text{end}}(\delta) \leftarrow & \sum_{i: \tau_i \neq 0} M_i^{\text{end}} \big(2 \delta -i-1 -\defect(\tau)\big) C_{\tau\down i}(\delta-i) \\
& + M_1^{\text{end}}\big(2\delta - 2 - \defect (\tau)\big)
C_\tau(\delta-1) \text{\;}
\end{align*}
}
$C \leftarrow 0$\;
\ForAll{types $\tau$ with $\defect(\tau) <N$}{
Shift the entries of $C_\tau^{\text{end}}(\delta)$ down by
$\defect(\tau)$\;
$C \leftarrow C +  \text{ shifted }C_\tau^{\text{end}}(\delta)$\;
}
}
\caption{Computation of the leading coefficients of the node polynomial.}
\end{algorithm}

\begin{proof}[Proof of Correctness of Algorithm~\ref{alg:nodecoeffs}.]
Proposition~\ref{prop:rec} guarantees that $C_\tau(\delta)$ is uniquely
determined by recursion  (\ref{eqn:Calpha}). By a similar argument as
in the proof of Proposition~\ref{prop:rec} we see that
$C_\tau^{\text{end}}(\delta)$ is given by the formula in Algorithm
\ref{alg:nodecoeffs}. By Lemma~\ref{lem:degree} all contributions of
template collections of type $\tau$ to the node
polynomial $N_\delta(d)$ are in degree $2\delta-\defect(\tau)$ or less. Hence, after shifting $C_\tau^{\text{end}}(\delta)$
by $\defect(\tau)$, their sum is the coefficient vector of $N_{\delta}(d)$.
\end{proof}

To solve recursion  (\ref{eqn:rec}) for a type $\tau$ we make use of the following
(conjectural) structure about $C_\tau(\delta)$ which has been verified for all types $\tau$ with
$\defect(\tau) \le 8$. This refines an observation of L. G\"ottsche
\cite[Remark 4.2 (2)]{Go} about the first $28$ (conjectural) coefficients of the
node polynomial $N_{\delta}(d)$.

\begin{conjecture}
\label{conj:polyn}
All entries of $C_\tau(\delta)$ are of the form
$\frac{3^\delta}{\delta !}$ times a polynomial in $\delta$.
\end{conjecture}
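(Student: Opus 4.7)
The natural approach is strong induction on $\delta$ driven by the recursion~(\ref{eqn:rec}) of Proposition~\ref{prop:rec}. Write $C_\tau(\delta) = \frac{3^\delta}{\delta!}\widetilde{C}_\tau(\delta)$, with the ansatz that $\widetilde{C}_\tau(\delta)$ has polynomial entries in $\delta$. The base cases are finite: for each $\tau$ with $\defect(\tau) < N$, I would compute $C_\tau(\delta)$ directly from~(\ref{eqn:Calpha}) for the values $\delta \le \delta_0(\tau)$ of Remark~\ref{rmk:initialdelta}, extract the prefactor $\frac{3^\delta}{\delta!}$, and verify polynomiality by interpolation across the needed window.

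For the inductive step, substitute the ansatz into~(\ref{eqn:rec}) and use the identity $\frac{3^{\delta-i}}{(\delta-i)!} = \frac{3^\delta}{\delta!}\cdot\frac{(\delta)_i}{3^i}$, where $(\delta)_i = \delta(\delta-1)\cdots(\delta-i+1)$ is polynomial in $\delta$. Factoring out $\frac{3^\delta}{\delta!}$ turns~(\ref{eqn:rec}) into
\[
\widetilde{C}_\tau(\delta) = \sum_{i:\tau_i\neq 0}\frac{(\delta)_i}{3^i}\,M_i\!\bigl(2\delta-i-1-\defect(\tau)\bigr)\,\widetilde{C}_{\tau\down i}(\delta-i) + \frac{\delta}{3}\,M_1\!\bigl(2\delta-2-\defect(\tau)\bigr)\,\widetilde{C}_\tau(\delta-1),
\]
and by induction each $\widetilde{C}_{\tau\down i}(\delta-i)$ and $\widetilde{C}_\tau(\delta-1)$ on the right-hand side is polynomial in $\delta$.

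The main obstacle — and, I suspect, the genuine depth of the conjecture — is that the entries of $M_i(a)$ are \emph{rational} in $a$, as is visible in the displayed matrix $M_1(a)$ with denominators $a+2, a+1, a, a-1,\ldots$. After the substitution $a = 2\delta-i-1-\defect(\tau)$ these denominators become linear factors in $\delta$, so the right-hand side of the rewritten recursion is only \emph{a priori} rational in $\delta$, and polynomiality must emerge from non-trivial cancellation with the numerators supplied by $(\delta)_i$ and by the inductive polynomials $\widetilde{C}_{\tau\down i}$. I would first attempt to track these denominators through Faulhaber's formula (Lemma~\ref{lem:faulhaber}): each iterated discrete integral contributes a denominator $s+1$ where $s$ is the current polynomial degree, and the structure of the template polynomials $P^*(\Gamma,k)$ might force a telescoping cancellation against the shifts $2\delta - i - 1 - \defect(\tau)$. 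A more conceptual alternative would be to seek an enumerative interpretation of $\widetilde{C}_\tau(\delta)$ in which the factor $\frac{3^\delta}{\delta!}$ arises combinatorially — reflecting the unordered $\delta$ nodes and some three-fold local choice per node, as already suggested by the leading term $\frac{3^\delta}{\delta!}d^{2\delta}$ of $N_\delta(d)$ — so that polynomiality becomes manifest rather than the output of symbolic cancellation. The verification for $\defect(\tau)\le 8$ reported in the paper is strong evidence that this cancellation is systematic, but turning that heuristic into a rigorous proof appears to be the crux of the matter.
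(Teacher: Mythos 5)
This statement is a \emph{conjecture} in the paper, not a theorem, and the paper offers no proof of it: Block verifies it computationally for all types $\tau$ with $\defect(\tau)\le 8$ and explicitly remarks, immediately after display~(\ref{eqn:localrec}), that the conjecture is \emph{equivalent} to every difference equation of the form $(2\delta-\defect(\tau)-j+1)\,3\,p(\delta)=p(\delta-1)+q(\delta)$ arising in Algorithm~\ref{alg:nodecoeffs} admitting a polynomial solution — a property that such equations do not enjoy in general. Your proposal, read honestly, reaches the same conclusion: you set up the inductive ansatz $C_\tau(\delta)=\frac{3^\delta}{\delta!}\widetilde C_\tau(\delta)$, correctly use $\frac{3^{\delta-i}}{(\delta-i)!}=\frac{3^\delta}{\delta!}\cdot\frac{(\delta)_i}{3^i}$ to normalize~(\ref{eqn:rec}), correctly locate the crux in the rational entries of $M_i(a)$ becoming linear denominators in $\delta$ after substituting $a=2\delta-i-1-\defect(\tau)$, and then stop short of showing the required cancellation. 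This matches the paper's own framing precisely: the leading denominator $2\delta-\defect(\tau)-j+1$ that shows up in~(\ref{eqn:localrec}) is exactly the obstruction you identified (the paper notes that after cancellation the denominators of $M_i(a)$ in row $j$ are $a+i-j+2$ or $1$), and the paper resolves it only by bounding the degree of $p(\delta)$ and solving a linear system case by case, i.e.\ by computation, not by argument. So there is no gap between your proposal and the paper to report — both leave the polynomiality unproven — but you should not present the inductive setup as a proof sketch; it is a reformulation of the conjecture, and the factor $\frac{3^\delta}{\delta!}$ must ultimately be explained either by a combinatorial interpretation (as you suggest) or via G\"ottsche's conjectural framework, neither of which is currently available.
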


Now, to solve recursion  (\ref{eqn:rec}), we  first extend the natural partial order on the types $\tau$ given by $|\tau| =
\sum_{j \ge 2} \tau_j$ to a linear order with smallest element $\tau =
(0,0,\dots)$. For example, for $N = 4$, we could take
\begin{displaymath}
(0,0,0) < (1,0,0) < (0,1,0) < (0,0,1) < (1,1,0) < (2,0,0) < (3,0,0).
\end{displaymath}
Then solve recursion (\ref{eqn:rec})
for each $\tau$, in increasing order, using the
lowertriangularity of the matrices $M_i$. For example, to
compute the second entry $\frac{3^\delta}{\delta !}p(\delta)$ of
$C_{1,1}(\delta)$ (assuming Conjecture~\ref{conj:polyn}), where
$p(\delta)$ is a polynomial in $\delta$, we need to solve
\begin{displaymath}
\small
C_{1,1}(\delta) =  M_1(2\delta-5) C_{1,1}(\delta-1) + M_2(2\delta-6)
C_{0,1}(\delta-2) + M_3(2\delta-7) C_{1,0}(\delta-3),
\end{displaymath}
or, explicitly,
\begin{displaymath}
\small \begin{bmatrix}
*  \\
\frac{3^\delta}{\delta !}p(\delta) \\
\vdots
 \end{bmatrix}
=
\begin{bmatrix}
* & 0 & 0  \\
* & * & 0 \\
\vdots & \vdots & \ddots  \end{bmatrix}
\begin{bmatrix}
*  \\
\frac{3^{\delta-1}}{(\delta-1)!}p(\delta-1) \\
\vdots  \end{bmatrix}
+
\begin{bmatrix}
* & 0 & 0  \\
* & * & 0 \\
\vdots & \vdots & \ddots   \end{bmatrix}
\begin{bmatrix}
*\\
* \\
\vdots \\
\end{bmatrix}
+
\begin{bmatrix}
* & 0 & 0  \\
* & * & 0 \\
\vdots & \vdots & \ddots  \end{bmatrix}
\begin{bmatrix}
*  \\
*\\
\vdots  \end{bmatrix}.
\end{displaymath}
The $*$-entries in the vectors $C_{0,1}$ and $C_{1,0}$ are known by a previous
computation. The $*$-entries in $M_1$, $M_2$ and $M_3$ are given by (\ref{eqn:Calpha}).
The proof of Lemma \ref{lem:independence} implies that all denominators of
$M_i(a)$ in row $j$ are $a+i-j+2$ or $1$ (after cancellation).
To compute $p(\delta)$, or, more
generally,
the $j$th entry in $C_\tau(\delta)$, we first clear all denominators and then solve the polynomial difference equation with
initial conditions
\begin{equation}
\label{eqn:localrec}
\begin{split}
 (2 \delta  -\defect(\tau)-j+1) 3 p(\delta) &= p(\delta-1) + q(\delta), \\
 p(\delta_0(\tau)) &= C_\tau(\delta_0(\tau)),
\end{split}
\end{equation}
where $q(\delta)$ is a rather complicated polynomial depending on earlier
calculations and $\delta_0(\tau)$ is as in
(\ref{eqn:initialdelta}). One way to solve (\ref{eqn:localrec}) is to bound the degree of
the polynomial $p(\delta)$ and solve the corresponding linear system.

Note that a difference equation of the form (\ref{eqn:localrec}) need
not have a polynomial
solution in general. Conjecture~\ref{conj:polyn} is equivalent to all
recursions (\ref{eqn:localrec}) appearing in Algorithm
\ref{alg:nodecoeffs} to have a polynomial solution.

As in Section~\ref{sec:nodepolys} (Remark \ref{rmk:improvements}),
Algorithm
~\ref{alg:nodecoeffs} can be improved significantly by summing the
 template polynomials $P(\Gamma,k)$ for templates $\Gamma$ with fixed
 $\big( k_{\min}(\Gamma), l(\Gamma), \e(\Gamma)\big)$ in advance.
Algorithm~\ref{alg:nodecoeffs} has been implemented in Maple. Once the templates are known the bottleneck of the algorithm
is the initial value computation which, with an improved
implementation, should be faster than the template
enumeration. Hence we expect Algorithm~\ref{alg:nodecoeffs} to compute the first $14$ terms of $N_\delta(d)$ in reasonable time.

\appendix
\section{Node Polynomials for $\delta \le 14$}
\label{app:newpolys}

An explicit list of $N_\delta(d)$, for $\delta \le 14$, is as
below. These polynomials are given implicitly in
Theorem~\ref{thm:nodepolynomials}. For $\delta \le 8$ this agrees with
\cite[Theorem~3.1]{KP}. For $\delta \le 14$ this coincides with the conjectural
(implicit) formulas of \cite[Remark~2.5]{Go}.

\begin{equation*}
\tiny
\begin{split}
N_0(d) &= 1, \\
N_1(d)
&=3(d-1)^2,\\
N_2(d)
&=\frac{3}{2}(d-1)(d-2)(3d^2-3d-11),\\
N_3(d)
&=\frac{9}{2}d^{6}-27d^{5}+\frac{9}{2}d^{4}+\frac{423}{2}d^{3}-229d^{2}-\frac{829}{2}d+525,\\
N_4(d)
&=\frac{27}{8}d^{8}-27d^{7}+\frac{1809}{4}d^{5}-642d^{4}-2529d^{3}+\frac{37881}{8}d^{2}+\frac{18057}{4}d-8865,\\
N_5(d)
&=\frac{81}{40}d^{10}-\frac{81}{4}d^{9}-\frac{27}{8}d^{8}+\frac{2349}{4}d^{7}-1044d^{6}-\frac{127071}{20}d^{5}+\frac{128859}{8}d^{4}+\frac{59097}{2}d^{3}-\frac{3528381}{40}d^{2}\\
& -\frac{946929}{20}d+153513,\\
N_6(d)
&=\frac{81}{80}d^{12}-\frac{243}{20}d^{11}-\frac{81}{20}d^{10}+\frac{8667}{16}d^{9}-\frac{9297}{8}d^{8}-\frac{47727}{5}d^{7}+\frac{2458629}{80}d^{6}+\frac{3243249}{40}d^{5}\\
& -\frac{6577679}{20}d^{4}-\frac{25387481}{80}d^{3}+\frac{6352577}{4}d^{2}+\frac{8290623}{20}d-2699706,\\
N_7(d)
&=\frac{243}{560}d^{14}-\frac{243}{40}d^{13}-\frac{243}{80}d^{12}+\frac{30861}{80}d^{11}-\frac{38853}{40}d^{10}-\frac{802143}{80}d^{9}+\frac{3140127}{80}d^{8}+\frac{18650493}{140}d^{7}\\
&
-\frac{54903831}{80}d^{6}-\frac{72723369}{80}d^{5}+\frac{124680069}{20}d^{4}+\frac{213537633}{80}d^{3}-\frac{3949576431}{140}d^{2}-\frac{188754021}{140}d\\
& +48016791,\\
N_8(d)
&=\frac{729}{4480}d^{16}-\frac{729}{280}d^{15}-\frac{243}{140}d^{14}+\frac{35721}{160}d^{13}-\frac{25839}{40}d^{12}-\frac{320841}{40}d^{11}+\frac{11847087}{320}d^{10}\\
&
+\frac{170823033}{1120}d^{9}-\frac{6685218}{7}d^{8}-\frac{1758652263}{1120}d^{7}+\frac{1102682031}{80}d^{6}+\frac{59797545}{8}d^{5}-\frac{510928080111}{4480}d^{4}\\
& -\frac{3283674393}{1120}d^{3} +\frac{558215113803}{1120}d^{2}-\frac{3722027733}{56}d-861732459,\\
N_9(d)
&=\frac{243}{4480}d^{18}-\frac{2187}{2240}d^{17}-\frac{729}{896}d^{16}+\frac{121743}{1120}d^{15}-\frac{99549}{280}d^{14}-\frac{824823}{160}d^{13}+\frac{8776593}{320}d^{12}+\frac{74122857}{560}d^{11}\\
&-\frac{2188424421}{2240}d^{10}-\frac{132610923}{70}d^{9}+\frac{11404136871}{560}d^{8}+\frac{2852923401}{224}d^{7}-\frac{3523392270287}{13440}d^{6}\\
&+\frac{4109675615}{448}d^{5}+\frac{261844582229}{128}d^{4}-\frac{2156232149611}{3360}d^{3}-\frac{29528525065861}{3360}d^{2}+\frac{438722045999}{168}d\\
& +15580950065,\\
\end{split}
\end{equation*}
\begin{equation*}
\tiny
\begin{split}
N_{10}(d)
&=\frac{729}{44800}d^{20}-\frac{729}{2240}d^{19}-\frac{729}{2240}d^{18}+\frac{408969}{8960}d^{17}-\frac{746253}{4480}d^{16}-\frac{1932579}{700}d^{15}+\frac{10649961}{640}d^{14}\\
&+\frac{205722099}{2240}d^{13}-\frac{4375229931}{5600}d^{12}-\frac{38815692777}{22400}d^{11}+\frac{30958937073}{1400}d^{10}+\frac{3413568339}{224}d^{9}\\
&-\frac{3624162885799}{8960}d^{8}+\frac{134470136581}{2800}d^{7}+\frac{27023302169081}{5600}d^{6}-\frac{22514488581251}{8960}d^{5}-\frac{811909836973903}{22400}d^{4}\\
&+\frac{253124357071961}{11200}d^{3}+\frac{867510616107447}{5600}d^{2}
-\frac{2800250331071}{40}d-283516631436, \\
N_{11}(d)
&=\frac{2187}{492800}d^{22}-\frac{2187}{22400}d^{21}-\frac{729}{6400}d^{20}+\frac{150903}{8960}d^{19}-\frac{303993}{4480}d^{18}-\frac{56670273}{44800}d^{17}+\frac{47717667}{5600}d^{16}\\
&+\frac{295979589}{5600}d^{15}-\frac{11410430877}{22400}d^{14}-\frac{4051907631}{3200}d^{13}+\frac{52491198663}{2800}d^{12}+\frac{3418059518271}{246400}d^{11}\\
&-\frac{20587006282467}{44800}d^{10}+\frac{2236636275459}{22400}d^{9}+\frac{49175916627959}{6400}d^{8}-\frac{1464110674563}{256}d^{7}\\
&-\frac{1946239824069277}{22400}d^{6}+\frac{3767687640687823}{44800}d^{5}+\frac{14264414890838423}{22400}d^{4}-\frac{940418544772283}{1600}d^{3}\\
&-\frac{168280746183263029}{61600}d^{2}+\frac{5073050867636909}{3080}d
+5187507215325,\\
N_{12}(d)
&=\frac{2187}{1971200}d^{24}-\frac{6561}{246400}d^{23}-\frac{2187}{61600}d^{22}+\frac{496449}{89600}d^{21}-\frac{136809}{5600}d^{20}-\frac{1618623}{3200}d^{19}+\frac{674946837}{179200}d^{18}\\
&+\frac{2321658693}{89600}d^{17}-\frac{893195181}{3200}d^{16}-\frac{34334301951}{44800}d^{15}+\frac{289702847403}{22400}d^{14}+\frac{1245724147341}{123200}d^{13}\\
&-\frac{803786361621603}{1971200}d^{12}+\frac{65497548165237}{492800}d^{11}+\frac{16192295343681}{1792}d^{10}-\frac{792669234543351}{89600}d^{9}\\
&-\frac{9506773589164709}{67200}d^{8}+\frac{6296062244021929}{33600}d^{7}+\frac{11029935159768347}{7168}d^{6}-\frac{582428855393100577}{268800}d^{5}\\
&-\frac{5477484616918678589}{492800}d^{4}+\frac{10067756533588172119}{739200}d^{3}+\frac{4454424013895459501}{92400}d^{2}\\
&-\frac{111952943233924509}{3080}d-95376705265437,\\
N_{13}(d) &=
\frac{6561}{25625600}d^{26}-\frac{6561}{985600}d^{25}-\frac{19683}{1971200}d^{24}+\frac{1620567}{985600}d^{23}-\frac{88209}{11200}d^{22}-\frac{3212703}{17920}d^{21}+\frac{262066023}{179200}d^{20}\\
&+\frac{494726373}{44800}d^{19}-\frac{673360047}{5120}d^{18}-\frac{35350103511}{89600}d^{17}+\frac{20952637821}{2800}d^{16}+\frac{3013479294723}{492800}d^{15}\\
&-\frac{580214902388013}{1971200}d^{14}+\frac{1666286215401123}{12812800}d^{13}+\frac{16384163286402207}{1971200}d^{12}-\frac{909876952033137}{89600}d^{11}\\
&-\frac{7649416285706767}{44800}d^{10}+\frac{25855007471662161}{89600}d^{9}+\frac{65085797443981191}{25600}d^{8}-\frac{108443195356282427}{22400}d^{7}\\
&-\frac{52991400162927629917}{1971200}d^{6}+\frac{1976324604711031517}{39424}d^{5}+\frac{13580753080243105219}{70400}d^{4}\\
&-\frac{73274705967431063281}{246400}d^{3}-\frac{68173290776099374391}{80080}d^{2}+\frac{2813974748454890667}{3640}d +1761130218801033,\\
N_{14}(d) &=
\frac{19683}{358758400}d^{28}-\frac{19683}{12812800}d^{27}-\frac{6561}{2562560}d^{26}+\frac{1751787}{3942400}d^{25}-\frac{4529277}{1971200}d^{24}-\frac{562059}{9856}d^{23}
\\
&
+\frac{398785599}{788480}d^{22}+\frac{5214288411}{1254400}d^{21}-\frac{4860008991}{89600}d^{20}-\frac{63174295089}{358400}d^{19}+\frac{332872084467}{89600}d^{18}\\
&
+\frac{3103879378581}{985600}d^{17}-\frac{4913807521304691}{27596800}d^{16}+\frac{899178800016807}{8968960}d^{15}+\frac{279086438050359453}{44844800}d^{14}\\
&
-\frac{468967272863997483}{51251200}d^{13}-\frac{318443311640108577}{1971200}d^{12}+\frac{328351365725506869}{985600}d^{11}\\
&
+\frac{1120586814080571923}{358400}d^{10}-\frac{9448861028448843949}{1254400}d^{9}-\frac{30880785216736406143}{689920}d^{8}\\
&
+\frac{444525313669622586903}{3942400}d^{7}+\frac{11429038221675466251}{24640}d^{6}-\frac{269709254062572016617}{246400}d^{5}\\
&
-\frac{74660630664748878665353}{22422400}d^{4}+\frac{140531359469510983018159}{22422400}d^{3}+\frac{16863931195154225977601}{1121120}d^{2}\\
& -\frac{64314454486825349085}{4004}d-32644422296329680.
\end{split}
\end{equation*}

\section{Small Severi degrees}
\label{app:smallSevdegs}

Below we list the Severi degrees $N^{d, \delta}$ for $0 \le \delta \le
14$ and $1 \le d \le 13$, which were obtained by Algorithm
  \ref{alg:nodepoly} (also see Remark \ref{rmk:numericalalgo}). Together
  with the node polynomials of Appendix
  \ref{app:newpolys}, this is a full description of all Severi degrees $N^{d, \delta}$ for
$\delta \le 14$, see Theorem \ref{thm:threshold}. The solid line
segments indicate the polynomial threshold $d^*(\delta)$ of $N^{d, \delta}$. The dashed line segments illustrate the
threshold of our Theorem~\ref{thm:threshold}. The Severi degrees $N^{d,
  \delta}$ in italic agree with the Gromov-Witten invariants $N_{d, \frac{(d-1)(d-2)}{2}
- \delta}$, as for $d \ge \delta + 2$, every plane degree $d$ curve with
$\delta$ nodes is irreducible.

\begin{picture}(100,499)
\put(80,-36){

\begin{rotate}{90}
\footnotesize
\begin{tabular}{c|ccccccccc}
& $d = 1$ & 2 & 3 & 4 & 5 & 6 & 7 & 8 & 9 \\
\hline
$N^{d,0}$ &1  & \it 1 & \it 1 & \it 1 & \it 1 & \it 1 & \it 1 & \it
1 & \it 1\\
$N^{d,1}$ & 0 &3 &\it 12 &\it 27 &\it 48 &\it 75 &\it 108 &\it 147 &\it 192\\
\cdashline{2-2}
$N^{d,2}$ & \multicolumn{1}{c:}{0}&0  &21 &\it 225 &\it 882 &\it 2370 &\it 5175 &\it 9891 &\it 17220\\
\cdashline{3-3} 
\cline{2-3} 
$N^{d,3}$ &0 & \multicolumn{1}{c|}{0}&15 &675 & \it 7915 &\it 41310 &\it 145383 &\it 404185 &\it 959115\\
\cdashline{4-4}
$N^{d,4}$ &0 &\multicolumn{1}{c|}{0} &\multicolumn{1}{c:}{0} &666 &36975 &\it 437517
&\it 2667375 &\it 11225145 &\it 37206936 \\
\cline{4-4} 
\cdashline{5-5}
$N^{d,5}$ & 0& 0& \multicolumn{1}{c|}{0} & \multicolumn{1}{c:}{378}&90027
&2931831 &\it 33720354 &\it 224710119 &\it 1068797961\\
\cdashline{6-6}
$N^{d,6}$ &0 & 0& \multicolumn{1}{c|}{0}&105 & \multicolumn{1}{c:}{109781}&12597900 &302280963 & \it 3356773532&\it 23599355991\\
\cline{5-5} 
\cdashline{7-7}
$N^{d,7}$ &0 &0 &0 &\multicolumn{1}{c|}{0} & 65949
&\multicolumn{1}{c:}{34602705} &1950179922 &38232604473 & \it 410453320698\\
\cdashline{8-8}
$N^{d,8}$ & 0 & 0 & 0&\multicolumn{1}{c|}{0} &26136 & 59809860& \multicolumn{1}{c:}{9108238023}& 336507128820&5717863228995\\
\cline{6-6} 
\cdashline{9-9}
$N^{d,9}$ & 0& 0& 0&0 &\multicolumn{1}{c|}{6930} &63338881 &30777542450 &\multicolumn{1}{c:}{2307156326490} &64541125393337\\
\cdashline{10-10}
$N^{d,10}$ &0 &0 &0 &0 & \multicolumn{1}{c|}{945}& 40047888&74808824094
&12372036675723 & \multicolumn{1}{c:}{595034126865816}\\
\cline{7-7} 
$N^{d,11}$ &0 &0 &0 &0 &0 & \multicolumn{1}{c|}{15580020}&129429708147 &51941532804912 &4504735527185481\\
$N^{d,12}$ &0 &0 &0 &0 &0 & \multicolumn{1}{c|}{4361721}&157012934283 &170460529136614 &28096248183844557\\
\cline{8-8} 
$N^{d,13}$ & 0 & 0& 0&0 &0 &918918 & \multicolumn{1}{c|}{131024290671}&
435634878105750 & 144607321488666150 \\
$N^{d,14}$ &0 & 0& 0& 0& 0& 135135 & \multicolumn{1}{c|}{73778495220}&
861893389007280& 614331908473795659 \\
\cline{9-9} 
\end{tabular}
\end{rotate}
}

\put(320,-36){

\begin{rotate}{90}
\footnotesize

\begin{tabular}{c|cccc}
& $d = 10$ & 11 & 12 & 13 \\
\hline
$N^{d,0}$ &\it 1 &\it 1 &\it 1 &\it 1 \\
$N^{d,1}$ &\it 243 & \it 300&\it 363 &\it 432\\
$N^{d,2}$ &\it 27972 &\it 43065&\it 63525&\it 90486\\
$N^{d,3}$ &\it 2029980& \it 3939295& \it 7139823& \it 12245355 \\
$N^{d,4}$ &\it 104285790 &\it 257991042 &\it 579308220 &\it 1203756165\\
$N^{d,5}$ &\it 4037126346 &\it 12886585236&\it 36161763120&\it 91629683271\\
$N^{d,6}$ &\it 122416062018 & \it 510681301550&\it 1807308075111 &\it 5622246678741\\
$N^{d,7}$ &\it 2983927028787 &\it 16491272517465 &\it 74314664917722&\it 285826689019395\\
$N^{d,8}$ &\it 59546865647151&\it 442342707233400& \it 2563893146687301&\it 12282025653769635\\
$N^{d,9}$ & 985875034961260&\it 9996104553443766 &\it 75318503191523715 &\it 452837863689428040\\
$N^{d,10}$ &13675748317151382 &192382226805469707 &\it 1905520429287295623 &\it 14494356139317787773\\
\cdashline{2-2}
$N^{d,11}$ & \multicolumn{1}{c:}{160116000544437849}& 3179784684983704875&41891284347920817345 &\it 406515611290526234685\\
\cdashline{3-3}
$N^{d,12}$ &1590895229889323034 &\multicolumn{1}{c:}{45433391588342055421} & 806014803108359459265&10065752539264069119357\\
\cdashline{4-4}
$N^{d,13}$ & 13467927464624076876 & 564072316378226731551
&\multicolumn{1}{c:}{13651752114752769885861 } & 221404946495996659008375\\
\cdashline{5-5}
$N^{d,14}$ &97415160821449934985 &6109881487479049410675 &
204507963635254531972650 
&\multicolumn{1}{c:}{4348333391475310314325875 }\\
\end{tabular}
\end{rotate}
} 
\end{picture}

\bibliographystyle{amsplain}
\bibliography{References_Florian}
\label{sec:biblio}

  \end{document}